\numberwithin{equation}{section}
\numberwithin{figure}{section}
\theoremstyle{plain}
\newtheorem{thm}{\protect\theoremname}
  \theoremstyle{definition}
  \newtheorem{example}[thm]{\protect\examplename}
  \theoremstyle{remark}
  \newtheorem{rem}[thm]{\protect\remarkname}
  \theoremstyle{definition}
  \newtheorem{defn}[thm]{\protect\definitionname}
  \theoremstyle{plain}
  \newtheorem{lem}[thm]{\protect\lemmaname}
\date{\today}
\thanks{{\bf Acknowledgements:}  We thank the DFG for support through the research project ``Random dynamical systems and regularization by noise for stochastic partial differential equations'' and through CRC 701. }
\keywords{degenerate SPDE, degenerate p-Laplace, mean curvature flow, regularity, stochastic variational inequalities, linear growth functionals}
\subjclass[2010]{Primary: 60H15; Secondary: 35R60,35K93}
  \providecommand{\definitionname}{Definition}
  \providecommand{\examplename}{Example}
  \providecommand{\lemmaname}{Lemma}
  \providecommand{\remarkname}{Remark}
\providecommand{\theoremname}{Theorem}
\begin{document}

\title[SVI and regularity for degenerate SPDE]{Stochastic variational inequalities and regularity for degenerate stochastic partial differential equations}

\author{Benjamin Gess}

\address{Department of Mathematics \\
University of Chicago \\
USA }

\email{gess@uchicago.edu}

\author{Michael Röckner}

\address{Faculty of Mathematics \\
University of Bielefeld \\
Germany}

\email{roeckner@mathematik.uni-bielefeld.de}
\begin{abstract}
The regularity and characterization of solutions to degenerate, quasilinear SPDE is studied. Our results are two-fold: First, we prove regularity results for solutions to certain degenerate, quasilinear SPDE driven by Lipschitz continuous noise. In particular, this provides a characterization of solutions to such SPDE in terms of (generalized) strong solutions. Second, for the one-dimensional stochastic mean curvature flow with normal noise we adapt the notion of stochastic variational inequalities to provide a characterization of solutions previously obtained in a limiting sense only. This solves a problem left open in \cite{ESR12} and sharpens regularity properties obtained in \cite{ESRS12}.\textbf{}
\end{abstract}
\maketitle

\section{Introduction}

The study of degenerate SPDE has attracted much interest in recent years. As a model example let us consider the following stochastic $p$-Laplace type SPDE
\begin{align}
dX_{t} & =\div\phi(\nabla X_{t})dt+B(X_{t})dW_{t}\label{eq:plp-intro}\\
X_{0} & =x_{0}\nonumber 
\end{align}
with zero Dirichlet boundary conditions on bounded, smooth domains $\mcO\subseteq\R^{d}$, for some monotone $\phi\in C^{1}(\R^{d};\R^{d})$ satisfying the coercivity property
\[
\phi(\xi)\cdot\xi\ge c|\xi|^{p}\quad\forall\xi\in\R^{d}
\]
for some $c>0.$ In the following let $W$ be a trace-class Wiener process on $L^{2}(\mcO)$. In the case $p>1$ a variational approach to such SPDE (under some further assumptions) has been developed in \cite{RRW07} for initial conditions $x_{0}\in L^{2}(\mcO)$ based on the coercivity property 
\begin{equation}
_{(W^{1,p})^{*}}\<\div\phi(\nabla v),v\>_{W^{1,p}}\ge c\|v\|_{W^{1,p}}^{p}\quad\forall v\in H^{2}(\mcO),\label{eq:coerc}
\end{equation}
for some $c>0$. In the degenerate case $p=1$ these methods do not apply anymore, since the reflexivity of the energy space $W^{1,p}(\mcO)$ is lost. In particular, this difficulty appears for the stochastic mean curvature flow in one spatial dimension 
\begin{align}
dX_{t} & =\frac{\partial_{x}^{2}X_{t}}{1+(\partial_{x}X_{t})^{2}}dt+B(X_{t})dW_{t}\label{eq:MC-intro}\\
 & =\partial_{x}\arctan(\partial_{x}X_{t})+B(X_{t})dW_{t},\quad\text{on }\mcO=(0,1)\nonumber 
\end{align}
and the stochastic total variation flow
\begin{align}
dX_{t} & \in\div\left(\frac{\nabla X_{t}}{|\nabla X_{t}|}\right)dt+B(X_{t})dW_{t}.\label{eq:TV_intro-2}
\end{align}
Restricting to more regular initial data (i.e. $x_{0}\in H_{0}^{1}(\mcO)$), in \cite{ESR12} an alternative, variational approach, applicable to the stochastic mean curvature flow \eqref{eq:MC-intro} has been developed, based on the coercivity property
\begin{equation}
(\div\phi(\nabla v),v)_{H_{0}^{1}}\ge0\quad\forall v\in H^{2}(\mcO).\label{eq:coerc-1-2}
\end{equation}
This approach was subsequently generalized in \cite{GT11} to multi-valued SPDE including the stochastic total variation flow \eqref{eq:TV_intro-2}. The restriction to regular initial data ($x_{0}\in H_{0}^{1}(\mcO)$) is crucial to this approach, since it allows to work with solutions taking values in $H_{0}^{1}(\mcO)$. For general initial data $x_{0}\in L^{2}(\mcO)$ solutions to \eqref{eq:MC-intro}, \eqref{eq:TV_intro-2} could be constructed in \cite{ESR12,GT11} in a limiting sense only. That is, it has been shown that for each sequence $x_{0}^{n}\to x$ in $L^{2}(\mcO)$ with $x_{0}^{n}\in H_{0}^{1}(\mcO)$ the corresponding variational solutions $X^{n}$ converge to a limit $X$ independent of the chosen approximating sequence $x_{0}^{n}$. However, no characterization of $X$ in terms of a (generalized) solution to the corresponding SPDE could be given. In particular, this problem remained unsolved for the stochastic mean curvature flow with normal noise
\begin{align}
dX_{t} & =\partial_{x}\left(\arctan(\partial_{x}X_{t})\right)dt+\a\sqrt{1+|\partial_{x}X_{t}|^{2}}\circ d\b_{t},\label{eq:mc_normal-intro}
\end{align}
on $\mcO=(0,1)$ with periodic boundary conditions. We quote from \cite{ESR12}: {[}..{]} \textit{in view of the poor regularity of the operator $A$ {[}$A(v)=\partial_{x}\left(\arctan(\partial_{x}v)\right)${]}, a more explicit characterization of the $L^{2}([0,1])$-valued process $\hat{u}_{t}^{x}$ by some SPDE or even just an associated Kolmogorov operator on smooth finitely based test functions does not seem to be available. }For background and motivation of the stochastic mean curvature flow with normal noise we refer to \cite{ESR12,FLP14}. A numerical treatment may be found in \cite{FLP14}, higher dimensional results in \cite{SY04,LS98,LS98-2,LS00,DLN01}.

Recently, for the special case of the total variation flow with linear multiplicative noise
\begin{align}
dX_{t} & \in\div\left(\frac{\nabla X_{t}}{|\nabla X_{t}|}\right)dt+\sum_{k=1}^{\infty}f_{k}X_{t}d\b_{t}^{k},\label{eq:TV_intro-1-2}
\end{align}
with $f_{k}:\mcO\to\R$, the problem of characterizing solutions for general initial data $x_{0}\in L^{2}(\mcO)$ has been solved in \cite{BR13} by introducing the concept of stochastic variational inequalities (SVI), a notion first developed in \cite{BDPR09} for \eqref{eq:TV_intro-1-2} with additive noise. It is shown in \cite{BR13} that the limiting solutions to \eqref{eq:TV_intro-1-2} obtained in \cite{GT11} can be uniquely characterized as SVI solutions to \eqref{eq:TV_intro-1-2}. For more general SPDE of the type \eqref{eq:plp-intro}, e.g. the stochastic mean curvature flow, the problem of characterizing solutions for general initial data remained open. 

The latter problem is solved in the current paper. Our results are two-fold: First, we prove that in certain situations more regularity of solutions to degenerate SPDE than previously expected can be proved. In these cases, the concept of SVI solutions is not necessary to characterize solutions for general initial data, since we may work with (analytically) strong solutions instead (cf. Definition \ref{def:strong_soln} below). This extends regularity results for degenerate, quasilinear SPDE developed in \cite{G12} and applies to degenerate $p$-Laplace type equations
\begin{align}
dX_{t} & =\div\left(\phi(\nabla X_{t})\right)dt+\sum_{k=1}^{\infty}g_{k}(\cdot,X_{t})d\b_{t}^{k}\label{eq:TV_intro-1-1}
\end{align}
with $\phi\in C^{1}(\R^{d};\R^{d})$ satisfying appropriate conditions (cf. \eqref{eq:psi_cdt_mc} below). Among other examples, this includes the stochastic mean curvature flow with vertical noise (cf. \cite{ESR12}), i.e. \eqref{eq:TV_intro-1-1} with $\phi=\arctan$, which significantly sharpens the regularity results obtained in \cite{ESRS12}. More precisely, for general initial data $x_{0}\in L^{2}(\mcO)$ we prove 
\begin{equation}
\div\left(\phi(\nabla X_{t})\right)\in L^{2}([\tau,T]\times\O;L^{2}(\mcO))\label{eq:intro_gen_strong}
\end{equation}
for each $\tau>0$, which allows to characterize $X$ as a generalized strong solution to \eqref{eq:TV_intro-1-1} (cf. Definition \ref{def:strong_soln} below). Note that \eqref{eq:intro_gen_strong} entails a regularizing effect with respect to the initial condition, i.e. while $x_{0}\in L^{2}(\mcO)$ we observe that $X$ takes values in the domain of $\div\phi(\nabla\cdot)$, $dt\otimes d\P$-almost everywhere.

For the stochastic mean curvature flow with normal noise \eqref{eq:mc_normal-intro} additional difficulties appear, due to the irregularity of the noise. Informally rewriting \eqref{eq:mc_normal-intro} in Itô form yields
\begin{align*}
dX_{t} & =\frac{\a^{2}}{2}\partial_{x}^{2}X_{t}dt+(1-\frac{\a^{2}}{2})\frac{\partial_{x}^{2}X_{t}}{1+(\partial_{x}X_{t})^{2}}dt+\a\sqrt{1+(\partial_{x}X_{t})^{2}}d\b_{t}\\
 & =\frac{\a^{2}}{2}\partial_{x}^{2}X_{t}dt+(1-\frac{\a^{2}}{2})\partial_{x}\arctan(\partial_{x}X_{t})dt+\a\sqrt{1+(\partial_{x}X_{t})^{2}}d\b_{t}.
\end{align*}
Again, we prove new regularity results for $X$ of the type
\[
\partial_{x}\arctan(\partial_{x}X_{t})\in L^{2}([\tau,T]\times\O;L^{2}(\mcO))\quad\forall\tau>0.
\]
In contrast to \eqref{eq:TV_intro-1-1}, this improved regularity does not yield the existence of generalized strong solutions due to the additional term $\frac{\a^{2}}{2}\partial_{x}^{2}X_{t}$. We resolve this issue by introducing a notion of SVI solutions to \eqref{eq:mc_normal-intro} and by proving the existence and uniqueness of SVI solutions for each initial condition $x_{0}\in L^{2}(\mcO)$. The results thus parallel those of \cite{BR13} for the case of the stochastic total variation flow \eqref{eq:TV_intro-2}. However, in contrast to \cite{BR13} our method does not rely on a transformation into a random PDE, which leads to the restriction to linear multiplicative noise (cf. \eqref{eq:TV_intro-1-2}) in \cite{BR13}. We would also like to point out a difference in the role played by SVI solutions in the case of \eqref{eq:TV_intro-2} and \eqref{eq:mc_normal-intro}: The necessity to work with SVI solutions in \cite{BR13} is grounded in the singularity of the multi-valued sign function $\Sgn(\xi)=\frac{\xi}{|\xi|}$. More precisely, if we replace $\Sgn$ by a smooth function $\phi$ in \eqref{eq:TV_intro-2} we are in the setting of \eqref{eq:TV_intro-1-1} and generalized strong solutions exist. In contrast to this, the difficulties arising for \eqref{eq:mc_normal-intro} are due to the irregularity of the noise, rather than the irregularity of $\phi.$ Nonetheless, in both cases SVI solutions provide the means to uniquely characterize solutions for general initial data.

\subsection{Notation}

In the following let $\mcO\subseteq\R^{d}$ be a bounded set with smooth boundary. For a Hilbert space $H$ we define $C_{l.b.}^{k}(H)$ to be the set of all continuous functions on $H$ with $k$ continuous derivatives that are locally bounded on $H$. We will work with the usual Lebesgue and Sobolev spaces $L^{p}(\mcO)$, $W^{k,p}(\mcO)$ writing $L^{p}$, $H^{k,p}$ for simplicity. We further set $H^{k}(\mcO)=W^{k,2}(\mcO)$. For a function $(x,r)\mapsto g(x,r)\in C^{1}(\bar{\mcO}\times\R)$ we define the partial gradient $\nabla_{x}g(x,r):=(\partial_{x_{i}}g)_{i=1}^{d}(x,r)$ while for a function $v\in C^{1}(\mcO)$ we let $\nabla g(\cdot,v)=(\nabla_{x}g)(\cdot,v)+(\partial_{r}g)(\cdot,v)\nabla v$ be the total gradient. In the proofs, as usual, constants may change from line to line.

\section{Stochastic Parabolic quasilinear problems for linear growth functionals\label{sec:mc-pinned}}

We consider SPDE of the form
\begin{align}
dX_{t} & =\div\left(\phi(\nabla X_{t})\right)dt+\sum_{k=1}^{\infty}{\normalcolor g^{k}(\cdot,X_{t})}d\b_{t}^{k},\label{eq:MC-pinned}\\
X_{0} & =x_{0}\nonumber 
\end{align}
with zero Dirichlet boundary conditions on bounded, convex, smooth domains $\mcO\subseteq\R^{d}$ with $d\le6$. Here, $\b^{k}$ are independent Brownian motions and
\begin{enumerate}
\item [(B)] $g^{k}\in C^{1}(\bar{\mcO}\times\R)$ with $g^{k}(x,0)=0$ for all $x\in\partial\mcO$ and
\[
\sum_{k=1}^{\infty}\mu_{k}<\infty,
\]
where 
\[
\mu_{k}:=\left\Vert \partial_{r}g^{k}\right\Vert _{C^{0}(\bar{\mcO}\times\R)}^{2}+\left\Vert \frac{|\nabla_{x}g^{k}(x,r)|}{1+|r|}\right\Vert _{C^{0}(\bar{\mcO}\times\R)}^{2}.
\]

\end{enumerate}
In particular, this includes additive noise, i.e. $g^{k}\in C_{0}^{1}(\bar{\mcO})$ and linear multiplicative noise, i.e. $g^{k}(x,r)=\vp^{k}(x)r$ for $\vp^{k}\in C^{1}(\bar{\mcO})$. We assume $\phi=\nabla\psi$ for some non-negative, convex, radial $\psi\in C^{2}(\R^{d};\R)$ with $\psi(0)=0$, $\phi,D\phi$ being Lipschitz and 
\begin{equation}
\begin{array}{ccccc}
c|\xi|-C & \le & \psi(\xi) & \le & C(1+|\xi|)\\
c\psi(\xi)-C & \le & \phi(\xi)\cdot\xi\\
 &  & |D\phi(\xi)||\xi|^{2} & \le & C(1+\psi(\xi)),
\end{array}\label{eq:psi_cdt_mc}
\end{equation}
for all $\xi\in\R^{d}$ and some constants $c>0,$ $C\ge0$. Then the recession function $\psi^{0}$ defined by 
\[
\psi^{0}(\xi):=\lim_{t\downarrow0}\psi\left(\frac{\xi}{t}\right)t
\]
exists and is finite since $\psi$ is of linear growth and $t\mapsto\psi\left(\frac{\xi}{t}\right)t$ is non-increasing. 
\begin{example}
Typical examples of $\psi$ are 
\begin{enumerate}
\item Mean curvature flow in one dimension (cf. \cite{ESR12,ESRS12}):
\begin{align*}
\psi(\xi) & =\xi\arctan(\xi)-\frac{1}{2}\log(\xi^{2}+1)\\
\dot{\psi}(\xi) & =\phi(\xi):=\arctan(\xi)\\
\dot{\phi}(\xi) & =\frac{1}{1+\xi^{2}},\quad\xi\in\R.
\end{align*}

\item Minimal surface/image denoising (cf. \cite{GT11,BR13,GR92,KOJ05}):
\begin{align*}
\psi(\xi) & =\sqrt{\ve+|\xi|^{2}}\\
\nabla\psi(\xi) & =\phi(\xi)=\frac{\xi}{\sqrt{\ve+|\xi|^{2}}}\\
D\phi(\xi) & =\frac{1}{\sqrt{\ve+|\xi|^{2}}}\left(-\frac{\xi\otimes\xi}{\ve+|\xi|^{2}}+Id\right),\quad\xi\in\R^{d},\ve>0,d\ge1.
\end{align*}
We note that $\psi(\xi)=\sqrt{\ve+|\xi|^{2}}$ may be considered as a smooth approximation to the total variation functional $\psi(\xi)=|\xi|$. In \cite{GR92} a general class of nonlinearities $\psi$ has been considered with regard to application in image restoration (cf. also \cite{ROF92}). 
\end{enumerate}
\end{example}
\begin{rem}
The same methods as developed in this section may be applied to nonlinearities arising in generalized Newtonian fluids:
\begin{align*}
\psi(\xi) & =\frac{1}{p}(1+|\xi|^{2})^{\frac{p}{2}}\\
\nabla\psi(\xi) & =\phi(\xi)=(1+|\xi|^{2})^{\frac{p-2}{2}}\xi\\
D\phi(\xi) & =(1+|\xi|^{2})^{\frac{p-2}{2}}\left((p-2)\frac{\xi\otimes\xi}{1+|\xi|^{2}}+Id\right),\quad\xi\in\R^{d},
\end{align*}
with $p\in(1,2)$. For simplicity we restrict to linear growth functionals satisfying \eqref{eq:psi_cdt_mc}.
\end{rem}
In this section we will work with the Hilbert spaces $H=L^{2}(\mcO)$ and $V=H_{0}^{1}(\mcO)$. For $v\in H$ we set $B(v)(h):=\sum_{k=1}^{\infty}g^{k}(\cdot,v)(e_{k},h)_{2}$, where $e_{i}\in H$ is an orthonormal basis of $H$. Then $B:H\to L_{2}(H,H)$ is of linear growth, i.e.
\begin{align*}
\|B(v)\|_{L_{2}(H,H)}^{2} & =\sum_{k=1}^{\infty}\|g^{k}(\cdot,v)\|_{H}^{2}\\
 & \le C(1+\|v\|_{H}^{2})\sum_{k=1}^{\infty}\mu_{k},
\end{align*}
for all $v\in H$. Similarly one shows that $H\ni v\mapsto B(v)\in L_{2}(H,H)$ is Lipschitz. Following \cite{A85} we define
\[
\vp(v):=\begin{cases}
\int_{\mcO}\psi(Dv)dx+\int_{\partial\mcO}\psi^{0}(\nu(x)v(x))H^{d-1}(dx) & \text{if }v\in L^{2}\cap BV\\
+\infty & \text{if }v\in L^{2}\setminus BV,
\end{cases}
\]
where $\int_{\mcO}\psi(\mu)dx$ for a bounded Radon measure $\mu$ with Lebesgue decomposition $\mu=\mu^{a}+\mu^{s}$ is defined as in \cite{A85}, i.e. 
\[
\int_{\mcO}\psi(\mu)dx=\int_{\mcO}\psi(\mu^{a})dx+\int_{\mcO}\psi^{0}\left(\frac{d\mu}{d|\mu|}\right)d|\mu|^{s}\text{ }
\]
and $\nu$ is the outer normal on $\partial\mcO.$ For $u\in BV$ we consider the Lebesgue decomposition $Du=D^{a}u+D^{s}u$ where $D^{a}u$ denotes the absolutely continuous part of $Du$ with respect to the Lebesgue measure with density $\nabla u$. Obviously, $\vp$ is convex and $\vp$ restricted to $W_{0}^{1,1}$ is continuous. Furthermore it follows from \cite[Fact 3.5]{A83} that $\vp$ is the lower-semicontinuous hull on $L^{2}$ of $\vp_{|W_{0}^{1,1}\cap L^{2}}$. In the sequel $\partial\vp:=\partial_{H}\vp$ denotes the subgradient of $\vp$ on $H$.
\begin{rem}
Under certain additional assumptions on $\psi$ (cf. \cite{ACM02} for details) the following characterization of the subgradient of $\vp$ has been given in \cite{ACM02}: We have $(u,v)\in\partial\vp$ iff $u\in L^{2}\cap BV$, $v\in L^{2}$, $\phi(\nabla u)\in X(\mcO)=\{z\in L^{\infty}(\mcO;\R^{d}):\,\div(z)\in L^{1}(\mcO)\}$ and
\begin{align*}
v & =-\div\phi(\nabla u),\\
\phi(\nabla u)\cdot D^{s}u & =\psi^{0}(D^{s}u),\\
{}[\phi(\nabla u),\nu](x) & \in-\Sgn(u(x))\psi^{0}(\nu(x)),\quad H^{d-1}-\text{a.e.},
\end{align*}
where $[z,\nu]$ denotes the weak trace for $z\in X(\mcO)$. 
\end{rem}
Our arguments will, however, not rely on this identification of the subgradient. We note that $\vp_{|H_{0}^{1}}$ is Gateaux-differentiable with derivative 
\[
D\vp_{|H_{0}^{1}}(u)(v)=\int_{\mcO}\phi(\nabla u)\nabla vdx.
\]
Using that $\vp$ is the lower-semicontinuous hull of $\vp_{|H_{0}^{1}}$ this implies: If $u\in H_{0}^{1}$ then 
\begin{equation}
\partial_{H_{0}^{1}}\vp(u)=\div\phi(\nabla u)\in H^{-1},\label{eq:H1-subgradient}
\end{equation}
where $\partial_{H_{0}^{1}}\vp:H_{0}^{1}\to H^{-1}$ denotes the subgradient of $\vp_{|H_{0}^{1}}$. If, in addition, $\div\phi(\nabla u)\in L^{2}$ then 
\[
\partial\vp(u)=-\div\phi(\nabla u).
\]
Hence, we may rewrite \eqref{eq:MC-pinned} in the \textit{relaxed} form
\begin{align}
dX_{t} & \in-\partial\vp(X_{t})dt+B(X_{t})dW_{t},\label{eq:MC-pinned-2}\\
X_{0} & =x_{0}.\nonumber 
\end{align}
In \cite{ESR12,GT11} a variational framework for regular initial conditions $x_{0}\in H_{0}^{1}$ has been developed, while for general initial data $x_{0}\in H$ solutions to \eqref{eq:MC-pinned} could only be constructed in a limiting sense. In the following we will introduce stronger notions of solutions to \eqref{eq:MC-pinned} based on the subgradient formulation \eqref{eq:MC-pinned-2}. The main result will be the proof of existence of solutions in this stronger sense. This includes the proof of higher regularity of solutions.
\begin{defn}
\label{def:strong_soln}Let $x_{0}\in L^{2}(\Omega;H).$ An $H$-continuous, $\mcF_{t}$-adapted process $X\in L^{2}(\Omega;C([0,T];H))$ for which there exists a selection $\eta\in-\partial\vp(X)$, $dt\otimes d\P$-a.e. is said to be a 
\begin{enumerate}
\item strong solution to \eqref{eq:MC-pinned} if 
\[
\eta\in L^{2}([0,T]\times\Omega;H)
\]
and $\P$-a.s.
\[
X_{t}=x_{0}+\int_{0}^{t}\eta_{r}dr+\int_{0}^{t}B(X_{r})dW_{r},\quad\forall t\in[0,T],
\]
as an equation in $H$.
\item generalized strong solution to \eqref{eq:MC-pinned} if 
\[
\eta\in L^{2}([\tau,T]\times\Omega;H),\quad\forall\tau>0
\]
and $\P$-a.s.
\[
X_{t}=X_{\tau}+\int_{\tau}^{t}\eta_{r}dr+\int_{\tau}^{t}B(X_{r})dW_{r},\quad\forall t\in[\tau,T],
\]
for all $\tau>0$, as an equation in $H$.
\end{enumerate}
\end{defn}

We prove the existence of strong solutions to \eqref{eq:MC-pinned} for initial conditions $x_{0}\in L^{2}(\Omega;H)$ satisfying $\E\vp(x_{0})<\infty$. Moreover, we will prove regularizing properties with respect to the initial condition due to the subgradient structure of the drift. This allows to characterize solutions for initial conditions $x_{0}\in L^{2}(\O;H)$, which before were constructed in a limiting sense only.
\begin{thm}
\label{thm:mc-pinned}Let $x_{0}\in L^{2}(\Omega;H)$. 
\begin{enumerate}
\item There is a unique generalized strong solution $(X,\eta)$ to \eqref{eq:MC-pinned} and $(X,\eta)$ satisfies
\begin{align*}
 & \E t\vp(X_{t})+\E\int_{0}^{t}r\|\eta_{r}\|_{H}^{2}dr\le C\left(\E\|x_{0}\|_{H}^{2}+1\right)\quad\forall t\in[0,T].
\end{align*}

\item If \textup{$\E\vp(x_{0})<\infty$.} Then, there is a unique strong solution $(X,\eta)$ to \eqref{eq:MC-pinned} satisfying
\begin{equation}
\E\vp(X_{t})+\E\int_{0}^{t}\|\eta_{r}\|_{H}^{2}dr\le\E\vp(x_{0})+C\quad\forall t\in[0,T].\label{approx_strong_S-3}
\end{equation}

\end{enumerate}
The (generalized) strong solution $ $$X$ coincides with the limit solution constructed in \cite{ESR12}.
\end{thm}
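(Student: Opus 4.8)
The plan is to construct $(X,\eta)$ as the limit of the strong solutions of the non-degenerate, coercive equations obtained by replacing $\phi$ by $\phi^{\varepsilon}:=\phi+\varepsilon\,\mathrm{Id}$, equivalently $\vp$ by $\vp^{\varepsilon}(v):=\vp(v)+\tfrac{\varepsilon}{2}\|\nabla v\|_{H}^{2}$ (with domain $H_{0}^{1}$), to derive two a priori estimates that are uniform in $\varepsilon$, and to pass to the limit using the maximal monotonicity of $\partial\vp$. Concerning Step 1: since $\langle\div\phi^{\varepsilon}(\nabla v),v\rangle=-\varepsilon\|\nabla v\|_{H}^{2}-\int_{\mcO}\phi(\nabla v)\cdot\nabla v\,dx\le-\varepsilon\|\nabla v\|_{H}^{2}+C$, the operator $v\mapsto\div\phi^{\varepsilon}(\nabla v)=-\partial\vp^{\varepsilon}(v)$ is monotone, hemicontinuous and coercive of order $p=2$ on $V=H_{0}^{1}$, so the classical variational theory yields a unique variational solution $X^{\varepsilon}$ for every $x_{0}\in L^{2}(\Omega;H)$; by the $H_{0}^{1}$-a priori estimate of \cite{ESR12,GT11} (which applies verbatim to $\phi^{\varepsilon}$, the additional $\varepsilon\Delta$ term only adding dissipation) one has $\E\int_{0}^{T}\|\nabla X_{r}^{\varepsilon}\|_{H}^{2}\,dr\le C$ uniformly in $\varepsilon$ when $x_{0}\in L^{2}(\Omega;H_{0}^{1})$. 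Adapting and extending to this setting the regularity theory of \cite{G12} (this is the point where $d\le6$ and the convexity of $\mcO$ enter, through $H^{2}$-elliptic estimates up to the boundary), $X^{\varepsilon}$ is in fact a strong solution, $\eta^{\varepsilon}:=\div\phi^{\varepsilon}(\nabla X^{\varepsilon})\in L^{2}([0,T]\times\Omega;H)$, and the It\^o formula $d\vp^{\varepsilon}(X_{t}^{\varepsilon})=-\|\eta_{t}^{\varepsilon}\|_{H}^{2}\,dt+(\eta_{t}^{\varepsilon},B(X_{t}^{\varepsilon})\,dW_{t})_{H}+\tfrac12\mathrm{Tr}[D^{2}\vp^{\varepsilon}(X_{t}^{\varepsilon})B(X_{t}^{\varepsilon})B(X_{t}^{\varepsilon})^{*}]\,dt$ holds.

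\emph{A priori estimates.} It\^o's formula for $\|X_{t}^{\varepsilon}\|_{H}^{2}$, the coercivity above, the linear growth $\|B(v)\|_{L_{2}(H,H)}^{2}\le C(1+\|v\|_{H}^{2})$, Gronwall and Burkholder--Davis--Gundy give, uniformly in $\varepsilon$, $\E\sup_{t\le T}\|X_{t}^{\varepsilon}\|_{H}^{2}+\E\int_{0}^{T}\vp(X_{r}^{\varepsilon})\,dr+\varepsilon\E\int_{0}^{T}\|\nabla X_{r}^{\varepsilon}\|_{H}^{2}\,dr\le C(\E\|x_{0}\|_{H}^{2}+1)$. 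Applying the It\^o formula of Step 1 to $t\mapsto t\vp^{\varepsilon}(X_{t}^{\varepsilon})$ gives $\E\,t\vp^{\varepsilon}(X_{t}^{\varepsilon})+\E\int_{0}^{t}r\|\eta_{r}^{\varepsilon}\|_{H}^{2}\,dr=\E\int_{0}^{t}\vp^{\varepsilon}(X_{r}^{\varepsilon})\,dr+\tfrac12\E\int_{0}^{t}r\,\mathrm{Tr}[D^{2}\vp^{\varepsilon}(X_{r}^{\varepsilon})B(X_{r}^{\varepsilon})B(X_{r}^{\varepsilon})^{*}]\,dr$. The trace term is controlled pointwise: writing $\nabla(g^{k}(\cdot,v))=(\nabla_{x}g^{k})(\cdot,v)+(\partial_{r}g^{k})(\cdot,v)\nabla v$, using that $D\phi=D^{2}\psi\ge0$ is symmetric and nonnegative (hence $2\langle Ma,b\rangle\le\langle Ma,a\rangle+\langle Mb,b\rangle$), the bounds $|D\phi|\le L$ and $|D\phi(\xi)||\xi|^{2}\le C(1+\psi(\xi))$ from \eqref{eq:psi_cdt_mc}, the bounds $|\nabla_{x}g^{k}(x,r)|^{2}\le\mu_{k}(1+|r|)^{2}$ and $|\partial_{r}g^{k}|^{2}\le\mu_{k}$ from (B), and $\sum_{k}\mu_{k}<\infty$, one obtains $\mathrm{Tr}[D^{2}\vp(v)B(v)B(v)^{*}]=\sum_{k}\int_{\mcO}D\phi(\nabla v)\nabla(g^{k}(\cdot,v))\cdot\nabla(g^{k}(\cdot,v))\,dx\le C(1+\|v\|_{H}^{2}+\vp(v))$, while the additional $\varepsilon$-contribution $\varepsilon\sum_{k}\int_{\mcO}|\nabla(g^{k}(\cdot,v))|^{2}\,dx\le C\varepsilon(1+\|v\|_{H}^{2}+\|\nabla v\|_{H}^{2})$ is absorbed by the term $\varepsilon\E\int\|\nabla X_{r}^{\varepsilon}\|_{H}^{2}\,dr$ from the first estimate. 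Since $\E\int_{0}^{t}\vp^{\varepsilon}(X_{r}^{\varepsilon})\,dr\le C$ by the first estimate, this yields $\E\,t\vp(X_{t}^{\varepsilon})+\E\int_{0}^{t}r\|\eta_{r}^{\varepsilon}\|_{H}^{2}\,dr\le C(\E\|x_{0}\|_{H}^{2}+1)$ uniformly in $\varepsilon$; and if $x_{0}\in L^{2}(\Omega;H_{0}^{1})$ with $\E\vp(x_{0})<\infty$, the analogous computation without the weight $t$ gives $\E\vp(X_{t}^{\varepsilon})+\E\int_{0}^{t}\|\eta_{r}^{\varepsilon}\|_{H}^{2}\,dr\le\E\vp(x_{0})+\tfrac{\varepsilon}{2}\E\|\nabla x_{0}\|_{H}^{2}+C$.

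\emph{Passage to the limit, uniqueness, identification.} For $x_{0}\in L^{2}(\Omega;H_{0}^{1})$ the It\^o formula for $\|X_{t}^{\varepsilon}-X_{t}^{\varepsilon'}\|_{H}^{2}$, the monotonicity of $\phi$ and the bound $(\varepsilon\Delta X^{\varepsilon}-\varepsilon'\Delta X^{\varepsilon'},X^{\varepsilon}-X^{\varepsilon'})_{H}\le\tfrac{|\varepsilon-\varepsilon'|}{2}(\|\nabla X^{\varepsilon}\|_{H}^{2}+\|\nabla X^{\varepsilon'}\|_{H}^{2})$, together with the uniform bound $\E\int_{0}^{T}\|\nabla X_{r}^{\varepsilon}\|_{H}^{2}\,dr\le C$ and Gronwall, show that $(X^{\varepsilon})$ is Cauchy in $L^{2}(\Omega;C([0,T];H))$; its limit $X$ is the variational solution of \cite{ESR12}. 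Extracting $\eta^{\varepsilon}\rightharpoonup\eta$ weakly in $L^{2}([0,T]\times\Omega;H)$ and testing the subgradient inequality $\vp(w)-\vp(X_{r}^{\varepsilon})\ge(-\eta_{r}^{\varepsilon},w-X_{r}^{\varepsilon})_{H}$ against nonnegative test processes on $[0,T]\times\Omega$, then using the strong convergence of $X^{\varepsilon}$, the weak convergence of $\eta^{\varepsilon}$ and the lower semicontinuity of $\vp$, one identifies $\eta\in-\partial\vp(X)$, $dt\otimes d\P$-a.e.; by \eqref{eq:H1-subgradient} this means $\eta=\div\phi(\nabla X)$, so $X$ is a strong solution, and lower semicontinuity passes the estimates of Step 2, in particular \eqref{approx_strong_S-3}. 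Uniqueness follows from It\^o on $\|X_{t}^{1}-X_{t}^{2}\|_{H}^{2}$, the monotonicity of $\partial\vp$ (which annihilates the drift difference), the Lipschitz continuity of $B$ and Gronwall; the same computation shows $x_{0}\mapsto X$ is continuous from $L^{2}(\Omega;H)$ to $L^{2}(\Omega;C([0,T];H))$. For $x_{0}\in L^{2}(\Omega;H)$ with $\E\vp(x_{0})<\infty$ one chooses $x_{0}^{n}\in L^{2}(\Omega;H_{0}^{1})$ with $x_{0}^{n}\to x_{0}$ in $L^{2}(\Omega;H)$ and $\E\vp(x_{0}^{n})\to\E\vp(x_{0})$ (a recovery sequence for the lower-semicontinuous hull), notes that the strong solutions $X^{n}$ are Cauchy in $L^{2}(\Omega;C([0,T];H))$ and satisfy \eqref{approx_strong_S-3} uniformly, and passes to the limit as above; this is part~(2). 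For general $x_{0}\in L^{2}(\Omega;H)$ one takes $x_{0}^{n}\in L^{2}(\Omega;H_{0}^{1})$ with $x_{0}^{n}\to x_{0}$ in $L^{2}(\Omega;H)$; the strong solutions $X^{n}$ satisfy the $t$-weighted estimate of Step 2 uniformly (its right-hand side being controlled by the energy estimate), are Cauchy in $L^{2}(\Omega;C([0,T];H))$, and the limit $(X,\eta)$ is, on each interval $[\tau,T]$, a generalized strong solution, obtained by passing to the limit in the equation and in the subgradient inequality on $[\tau,T]$; uniqueness is the $\tau\downarrow0$ version of the previous argument, using the $H$-continuity of the paths. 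Finally, since for $x_{0}^{n}\in H_{0}^{1}$ the strong solution $X^{n}$ coincides with the variational solution of \cite{ESR12}, and the limit solution of \cite{ESR12} for general $x_{0}$ is by construction $\lim_{n}X^{n}$ in $L^{2}(\Omega;C([0,T];H))$, the (generalized) strong solution $X$ coincides with that limit solution.

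\emph{Main obstacle.} The decisive step is Step 1: establishing, for the regularised coercive equation, the It\^o/chain-rule formula for the merely Gateaux-differentiable convex functional $\vp^{\varepsilon}$ along the variational solution, together with the regularity $\eta^{\varepsilon}\in L^{2}([0,T]\times\Omega;H)$ needed even to state it. This is where the (double) approximation in the spirit of \cite{G12} and the $H^{2}$-regularity on convex domains come in, and where the structural restrictions $d\le6$ and convexity of $\mcO$ are needed. Given this, the trace bound in Step 2 is exactly the place where hypotheses \eqref{eq:psi_cdt_mc} and (B) are used, and the limit passage and uniqueness in Step 3 are standard monotone-operator and density arguments.
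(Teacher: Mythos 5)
Your proposal is correct and follows essentially the same route as the paper: the identical vanishing-viscosity regularization $\ve\D+\div\phi(\nabla\cdot)$, the $L^2$- and $H_0^1$-energy estimates, It\^o's formula applied to $\vp^{\ve}(X^{\ve})$ and $t\vp^{\ve}(X^{\ve})$ with the trace term controlled exactly as you describe via (B) and \eqref{eq:psi_cdt_mc}, monotonicity for the Cauchy property in $\ve$, weak compactness of $\eta^{\ve}$ plus demiclosedness of $\partial\vp$ for the identification, and the same three-tier treatment of the initial datum. The ``main obstacle'' you defer is precisely what the paper resolves by a Galerkin argument in the eigenbasis of $-\D$ (using $\vp(J^{\l}v)\le\vp(v)$ on convex domains to get $(v,\div\phi(\nabla v))_{H_0^1}\le0$, hence the uniform $H^2$-bound) together with It\^o's formula for the resolvent-smoothed functional $\vp^{\ve,\l}=\vp^{\ve}\circ J^{\l}$ followed by $\l\to0$ --- i.e.\ the double approximation you correctly anticipate.
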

The proof of Theorem \ref{thm:mc-pinned} proceeds in several steps. In order to justify our calculations we will consider a vanishing viscosity approximation to \eqref{eq:MC-pinned}. Let us first assume $x_{0}\in L^{2}(\O;H_{0}^{1})$. We will remove this restriction in the end of the proof. We consider the following non-degenerate approximation:
\begin{align}
dX_{t}^{\ve} & \in\ve\D X_{t}^{\ve}dt+\div\phi(\nabla X_{t}^{\ve})dt+B(X_{t}^{\ve})dW_{t},\label{eq:vis_approx}\\
X_{0}^{\ve} & =x_{0}.\nonumber 
\end{align}
 For $v\in H$ we define
\[
\vp^{\ve}(v):=\begin{cases}
\frac{\ve}{2}\int_{\mcO}|\nabla v|^{2}dx+\int_{\mcO}\psi(\nabla v)dx, & \text{for }v\in H_{0}^{1}\\
+\infty, & \text{otherwise.}
\end{cases}
\]
Note that $\vp^{\ve}\in C^{2}(H_{0}^{1}\cap H^{2})$ with Lipschitz continuous derivatives given by 
\begin{align*}
D\vp^{\ve}(v)(h) & =\ve\int_{\mcO}\nabla v\cdot\nabla hdx+\int_{\mcO}\phi(\nabla v)\cdot\nabla hdx,\\
D^{2}\vp^{\ve}(v)(g,h) & =\ve\int_{\mcO}\nabla h\cdot\nabla gdx+\int_{\mcO}\nabla h\cdot D\phi(\nabla v)\nabla gdx.
\end{align*}
 To check the claimed continuity we note that
\begin{align*}
D\vp^{\ve}(v)(h)-D\vp^{\ve}(w)(h) & =\ve\int_{\mcO}(\nabla v-\nabla w)\cdot\nabla hdx+\int_{\mcO}(\phi(\nabla v)-\phi(\nabla w))\cdot\nabla hdx\\
 & \le\ve\|v-w\|_{H_{0}^{1}}\|h\|_{H_{0}^{1}}+\|\phi(\nabla v)-\phi(\nabla w)\|_{2}\|h\|_{H_{0}^{1}}\\
 & \lesssim(\ve+1)\|v-w\|_{H_{0}^{1}}\|h\|_{H_{0}^{1}}
\end{align*}
and
\begin{align}
D^{2}\vp^{\ve}(v)(g,h)-D^{2}\vp^{\ve}(w)(g,h) & =\int_{\mcO}\nabla h\cdot\left(D\phi(\nabla v)-D\phi(\nabla w)\right)\nabla gdx\nonumber \\
 & \le\|\nabla h\|_{3}\|\nabla g\|_{3}\|D\phi(\nabla v)-D\phi(\nabla w)\|_{3}\label{eq:lip_derivatives}\\
 & \lesssim\|\nabla h\|_{3}\|\nabla g\|_{3}\|\nabla v-\nabla w\|_{3}\nonumber \\
 & \lesssim\|h\|_{H_{0}^{1}\cap H^{2}}\|g\|_{H_{0}^{1}\cap H^{2}}\|v-w\|_{H_{0}^{1}\cap H^{2}},\nonumber 
\end{align}
where we use the Sobolev embedding $H^{1}\hookrightarrow L^{3}$ due to $d\le6$. Hence, $\vp^{\ve}\in C^{2}(H_{0}^{1}\cap H^{2})$ with Lipschitz continuous derivatives. Moreover, $\vp^{\ve}$ is a convex, lower-semicontinuous  function on $H_{0}^{1}$ with (single-valued) subgradient given by
\[
A^{\ve}(v):=-\partial_{H_{0}^{1}}\vp^{\ve}(v)=\ve\D v+\div\phi(\nabla v)\in H^{-1},\quad\text{for }v\in H_{0}^{1}.
\]

By \cite{PR07} we know that there is a unique, variational solution $X^{\ve}\in L^{2}(\Omega;C([0,T];H))\cap L^{2}([0,T]\times\Omega;H_{0}^{1})$ to \eqref{eq:vis_approx} satisfying the estimate
\begin{align*}
\E\sup_{t\in[0,T]}\|X_{t}^{\ve}\|_{H}^{2} & \le C\E\|x_{0}\|_{H}^{2}.
\end{align*}
We will now prove that in fact $X^{\ve}$ is a strong solution in the following sense:
\begin{lem}
\label{lem:H1-bound}For each $\ve>0$ we have $X^{\ve}\in L^{2}([0,T]\times\Omega;H^{2}\cap H_{0}^{1})$ and
\[
\E\sup_{t\in[0,T]}e^{-Ct}\|X_{t}^{\ve}\|_{H_{0}^{1}}^{2}+4\ve\int_{\text{0}}^{t}\E e^{-Cr}\|X_{r}^{\ve}\|_{H^{2}}^{2}dr\le C\left(\E\|x_{0}\|_{H_{0}^{1}}^{2}+1\right),
\]
for some constant $C$ independent of $\ve>0.$\end{lem}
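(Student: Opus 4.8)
The plan is to prove the estimate first for the finite-dimensional Galerkin approximations underlying the construction of the variational solution $X^\ve$ in \cite{PR07} --- where the classical It\^o formula is available without regularity worries --- and then to pass to the limit; a direct argument on $X^\ve$ is not possible because a priori $X^\ve$ only takes values in $H_0^1$, while the drift term below requires $H^2$-regularity even to be written down. Let $(e_k)$ be the eigenbasis of $-\D$ on $\mcO$ with zero Dirichlet conditions, set $V_n:=\mathrm{span}(e_1,\dots,e_n)$, let $P_n$ be the $L^2$-orthogonal projection onto $V_n$, and let $X^{\ve,n}$ be the (globally defined) solution of
\[
dX^{\ve,n}_t=P_nA^\ve(X^{\ve,n}_t)\,dt+P_nB(X^{\ve,n}_t)\,dW_t,\qquad X^{\ve,n}_0=P_nx_0,
\]
which by the usual It\^o argument on $\|\cdot\|_H^2$ satisfies $\E\sup_{t\le T}\|X^{\ve,n}_t\|_H^2\le C\E\|x_0\|_H^2$ with $C$ independent of $n$ and $\ve$.

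I would then apply the classical It\^o formula to $t\mapsto e^{-Ct}\|X^{\ve,n}_t\|_{H_0^1}^2=e^{-Ct}\|\nabla X^{\ve,n}_t\|_2^2$; since the $e_k$ are Laplace eigenfunctions, $P_n$ commutes with $-\D$ on $V_n$ and no commutator errors appear, giving
\begin{align*}
e^{-Ct}\|X^{\ve,n}_t\|_{H_0^1}^2={}&\|P_nx_0\|_{H_0^1}^2+\int_0^te^{-Cr}\Big[-C\|X^{\ve,n}_r\|_{H_0^1}^2+2(-\D X^{\ve,n}_r,A^\ve(X^{\ve,n}_r))_H\\
&+\textstyle\sum_k\|\nabla P_n(g^k(\cdot,X^{\ve,n}_r))\|_2^2\Big]\,dr+M^n_t,
\end{align*}
with $M^n_t:=2\int_0^te^{-Cr}(-\D X^{\ve,n}_r,P_nB(X^{\ve,n}_r)\,dW_r)_H$. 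The decisive computation is for the drift, $(-\D v,A^\ve(v))_H=-\ve\|\D v\|_2^2+(-\D v,\div\phi(\nabla v))_H$: integrating by parts twice --- legitimate since $v\in V_n$ is smooth and vanishes on $\partial\mcO$ --- one finds
\[
(-\D v,\div\phi(\nabla v))_H=-\int_\mcO\sum_{m=1}^d(\nabla\partial_m v)^{T}D\phi(\nabla v)(\nabla\partial_m v)\,dx-\int_{\partial\mcO}\dot\psi(|\partial_\nu v|)\,|\partial_\nu v|\,(\div\nu)\,dH^{d-1}\le 0,
\]
the bulk integral being non-positive since $D\phi=D^2\psi\ge0$, the boundary integral since $\dot\psi\ge0$ (radiality of $\psi$) and $\div\nu\ge0$ (convexity of $\mcO$). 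Hence, dropping $P_n$ by the fact that it is a contraction on $H_0^1$, the drift integrand is $\le-C\|X^{\ve,n}_r\|_{H_0^1}^2-2\ve\|\D X^{\ve,n}_r\|_2^2+\sum_k\|\nabla(g^k(\cdot,X^{\ve,n}_r))\|_2^2$.

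For the noise correction, assumption (B) and $\nabla(g^k(\cdot,v))=\nabla_xg^k(\cdot,v)+\partial_rg^k(\cdot,v)\nabla v$ yield $\sum_k\|\nabla(g^k(\cdot,v))\|_2^2\le C_0(1+\|v\|_H^2+\|v\|_{H_0^1}^2)$ with $C_0$ depending only on $\sum_k\mu_k$, so fixing $C>C_0$ absorbs the $\|X^{\ve,n}_r\|_{H_0^1}^2$-term. For the martingale, the boundary term of the integration by parts $(-\D v,g^k(\cdot,v))_H=(v,g^k(\cdot,v))_{H_0^1}$ vanishes (as $g^k(\cdot,0)=0$ and $v=0$ on $\partial\mcO$), so $\langle M^n\rangle_t\le C\int_0^t\|X^{\ve,n}_r\|_{H_0^1}^2(1+\|X^{\ve,n}_r\|_H^2+\|X^{\ve,n}_r\|_{H_0^1}^2)\,dr$, and Burkholder--Davis--Gundy followed by Young's inequality bound $\E\sup_{s\le t}|M^n_s|$ by $\tfrac14\E\sup_{s\le t}e^{-Cs}\|X^{\ve,n}_s\|_{H_0^1}^2+C\E\int_0^t(1+\|X^{\ve,n}_r\|_H^2+\|X^{\ve,n}_r\|_{H_0^1}^2)\,dr$. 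Taking suprema and expectations, absorbing the $\sup$-term, inserting the uniform $L^2$-bound, and applying Gronwall's lemma (all quantities being finite at the Galerkin level) then gives
\[
\E\sup_{t\le T}e^{-Ct}\|X^{\ve,n}_t\|_{H_0^1}^2+2\ve\,\E\int_0^Te^{-Cr}\|\D X^{\ve,n}_r\|_2^2\,dr\le C\big(\E\|x_0\|_{H_0^1}^2+1\big)
\]
with $C$ independent of $n$ and $\ve$.

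Finally, on convex domains $\|v\|_{H^2}^2\le c\|\D v\|_2^2$ for all $v\in H^2\cap H_0^1$, converting the dissipation term into the claimed $H^2$-bound (the numerical constant being absorbed on the right). Since $X^{\ve,n}\to X^\ve$ by the construction in \cite{PR07}, the uniform bounds yield, along a subsequence, $X^{\ve,n}\rightharpoonup X^\ve$ weakly in $L^2([0,T]\times\O;H^2\cap H_0^1)$ and weak-$*$ in $L^2(\O;L^\infty([0,T];H_0^1))$; weak lower-semicontinuity of the norms (together with a Fatou argument along the pointwise-in-$t$ weak limits for the $\sup$) transfers the estimate to $X^\ve$, and in particular $X^\ve\in L^2([0,T]\times\O;H^2\cap H_0^1)$. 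The main obstacle --- and the only genuinely non-routine point --- is the integration-by-parts identity for $(-\D v,\div\phi(\nabla v))_H$ and the non-positivity of its boundary contribution; this is precisely where convexity of $\mcO$, the zero Dirichlet condition, and the radiality of $\psi$ are used, and it is the structural reason for the standing hypotheses on $\mcO$ and $\psi$.
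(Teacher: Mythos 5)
Your proposal is correct and reaches the same a priori estimate, but it handles the one genuinely delicate point --- the sign of $(-\D v,\div\phi(\nabla v))_{2}$ for $v$ in the Galerkin space --- by a different mechanism than the paper. You integrate by parts twice and analyse the boundary contribution explicitly, using that on $\partial\mcO$ the zero Dirichlet condition forces $\nabla v=(\partial_{\nu}v)\nu$, that $\psi$ is radial and convex (so the bulk term $\sum_{m}(\nabla\partial_{m}v)^{T}D\phi(\nabla v)\nabla\partial_{m}v\ge0$ and the radial derivative of $\psi$ is non-negative), and that $\mcO$ is convex (so the curvature factor in the boundary term has the right sign). The paper instead argues ``softly'': it writes $(v,\div\phi(\nabla v))_{H_{0}^{1}}=(-\D v,\div\phi(\nabla v))_{2}$ as a limit of $-n(J^{1/n}v-v,\div\phi(\nabla v))_{2}$ via the Yosida approximation of $-\D$, bounds this by $n(\vp(J^{1/n}v)-\vp(v))$ using the subgradient inequality, and then invokes the resolvent contraction property $\vp(J^{\l}v)\le\vp(v)$ from \cite[Proposition 8.2]{BR13}; convexity of $\mcO$ and radiality of $\psi$ enter only through that cited result, so the explicit boundary computation you perform is essentially outsourced there. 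Your route is more self-contained and makes visible exactly where each structural hypothesis is used, at the price of a boundary identity that you state somewhat schematically and would need to justify carefully (tangential/normal decomposition on $\partial\mcO$, cf.\ the classical convexity argument for $\|v\|_{H^{2}}\lesssim\|\D v\|_{2}$); the paper's route avoids all boundary bookkeeping but is less transparent. Two minor points: in dropping $P^{n}$ from the drift pairing the relevant fact is that $-\D v\in V_{n}$ and $P^{n}$ is self-adjoint (contractivity of $P^{n}$ on $H_{0}^{1}$ is what you need for the noise term, and it does hold here since $V_{n}$ is spanned by Laplace eigenfunctions); and your final conversion of $\|\D v\|_{2}^{2}$ into $\|v\|_{H^{2}}^{2}$ via convexity of $\mcO$ is also implicit in the paper. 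Everything else --- Galerkin It\^o formula, treatment of $\|\nabla g^{k}(\cdot,v)\|_{2}$ via assumption (B), Burkholder--Davis--Gundy, and passage to the limit by weak lower semicontinuity --- matches the paper's proof.
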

\begin{proof}
In the following we let $(e_{i})_{i=1}^{\infty}$ be an orthonormal basis of eigenvectors of $-\D$ on $L^{2}$. We further let $P^{n}:H\to\text{span}\{e_{1},\dots,e_{n}\}$ be the orthogonal projection onto the span of the first $n$ eigenvectors. We recall that the unique variational solution $X^{\ve}$ to \eqref{eq:vis_approx} is constructed in \cite{PR07} as a (weak) limit in $L^{2}([0,T]\times\O;H_{0}^{1})$ of the solutions to the following Galerkin approximation
\begin{align*}
dX_{t}^{n} & =\ve P^{n}\D X_{t}^{n}dt+P^{n}\div\phi(\nabla X_{t}^{n})dt+P^{n}B(X_{t}^{n})dW_{t}^{n},\\
X_{0}^{n} & =P^{n}x_{0}.
\end{align*}
Itô's formula then yields
\begin{align*}
\|X_{t}^{n}\|_{H_{0}^{1}}^{2} & =\|P^{n}x_{0}\|_{H_{0}^{1}}^{2}+2\int_{0}^{t}(X_{r}^{n},\ve P^{n}\D X_{r}^{n}+P^{n}\div\phi(\nabla X_{t}^{n}))_{H_{0}^{1}}dr\\
 & +2\int_{0}^{t}(X_{r}^{n},P^{n}B(X_{r}^{n})dW_{r}^{n})_{H_{0}^{1}}dr+\int_{0}^{t}\|P^{n}B(X_{r}^{n})\|_{L_{2}(H,H_{0}^{1})}^{2}dr\\
 & =\|P^{n}x_{0}\|_{H_{0}^{1}}^{2}-2\ve\int_{0}^{t}\|\D X_{r}^{n}\|_{2}^{2}dr+2\int_{0}^{t}(X_{r}^{n},P^{n}\div\phi(\nabla X_{t}^{n}))_{H_{0}^{1}}dr\\
 & +2\int_{0}^{t}(X_{r}^{n},P^{n}B(X_{r}^{n})dW_{r}^{n})_{H_{0}^{1}}dr+\int_{0}^{t}\|P^{n}B(X_{r}^{n})\|_{L_{2}(H,H_{0}^{1})}^{2}dr.
\end{align*}
For $v\in H_{0}^{1}$ smooth we note
\begin{align*}
(v,\div\phi(\nabla v))_{H_{0}^{1}} & =(-\D v,\div\phi(\nabla v))_{2}\\
 & =\lim_{n\to\infty}(T^{n}v,\div\phi(\nabla v))_{2}\\
 & =-\lim_{n\to\infty}n(J^{\frac{1}{n}}v-v,\div\phi(\nabla v))_{2},
\end{align*}
where $J^{\frac{1}{n}}:=(1-\frac{1}{n}\D)^{-1}$ is the resolvent and $T^{n}=n(1-J^{\frac{1}{n}})$ is the Yosida-approximation of $-\D$ on $L^{2}$. Since $\div\phi(\nabla v)=-\partial\vp(v)$ we obtain
\begin{align*}
(v,\div\phi(\nabla v))_{H_{0}^{1}} & \le\lim_{n\to\infty}n(\vp(J^{\frac{1}{n}}v)-\vp(v)).
\end{align*}
We note that
\begin{align*}
\vp(J^{\frac{1}{n}}v) & =\int_{\mcO}\psi(\nabla J^{\frac{1}{n}}v)dx\le\int_{\mcO}\psi(\nabla v)dx=\vp(v)
\end{align*}
due to \cite[Proposition 8.2]{BR13} (using that $\psi$ is radial) and thus (choosing $v=X_{r}^{n}$)
\begin{align*}
(X_{r}^{n},\div\phi(\nabla X_{r}^{n}))_{H_{0}^{1}} & \le0.
\end{align*}
Using this, 
\begin{align*}
\|B(v)\|_{L_{2}(H,H_{0}^{1})}^{2} & =\sum_{k=1}^{\infty}\|g^{k}(x,v)\|_{H_{0}^{1}}^{2}\\
 & =\sum_{k=1}^{\infty}\|\nabla_{x}g^{k}(x,v)+\partial_{r}g^{k}(x,v)\nabla v\|_{L^{2}}^{2}\\
 & \le C(1+\|v\|_{H_{0}^{1}}^{2})\sum_{k=1}^{\infty}\mu_{k}\quad\forall v\in H_{0}^{1}
\end{align*}
and the Burkholder-Davis-Gundy inequality yields
\begin{align}
\frac{1}{2}\E\sup_{t\in[0,T]}e^{-Ct}\|X_{t}^{n}\|_{H_{0}^{1}}^{2} & \le\E\|x_{0}\|_{H_{0}^{1}}^{2}-2\ve\E\int_{0}^{T}e^{-Cr}\|\D X_{r}^{n}\|_{2}^{2}dr+C,\label{eq:galerkin_ineq}
\end{align}
for some $C>0$ large enough. Hence, $X^{n}$ is uniformly bounded in $L^{2}([0,T]\times\O;H^{2})$ and $L^{2}(\O;L^{\infty}([0,T];H_{0}^{1}))$ and we may extract a weakly (weak$^{*}$ resp.) convergent subsequence (for simplicity we stick with the notation $X^{n}$). Therefore, we have
\begin{align*}
X^{n} & \rightharpoonup X,\quad\text{in }L^{2}([0,T]\times\O;H^{2}),\\
X^{n} & \rightharpoonup^{*}X,\quad\text{in }L^{2}(\O;L^{\infty}([0,T];H_{0}^{1})),
\end{align*}
for $n\to\infty$. By weak lower semicontinuity of the norms we may pass to the limit in \eqref{eq:galerkin_ineq} which yields the claim.\end{proof}
\begin{lem}
\label{lem:H-bound}For each $\ve>0$ we have $\vp^{\ve}(X^{\ve})\in L^{1}([0,T]\times\Omega)$ with
\[
\sup_{t\in[0,T]}\E\|X_{t}^{\ve}\|_{H}^{2}+\E\int_{0}^{T}e^{-Cr}\vp^{\ve}(X_{r}^{\ve})dr\le C\left(\E\|x_{0}\|_{H}^{2}+1\right),
\]
for some constant $C$ independent of $\ve>0.$ \end{lem}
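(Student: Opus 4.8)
The plan is to apply It\^o's formula for $\|\cdot\|_{H}^{2}$ to the weighted process $t\mapsto e^{-Ct}\|X_{t}^{\ve}\|_{H}^{2}$ along the solution $X^{\ve}$ of \eqref{eq:vis_approx}, to use the coercivity in \eqref{eq:psi_cdt_mc} to extract a term $-(c\wedge2)\,\vp^{\ve}(X_{t}^{\ve})$ from the drift, and to pick the weight exponent $C$ large enough --- depending only on $\sum_{k}\mu_{k}$, and crucially not on $\ve$ --- so that the linear-growth contribution of the noise is absorbed by the term $-Ce^{-Ct}\|X_{t}^{\ve}\|_{H}^{2}$ produced by the weight. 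The key point is to keep the viscous term $-\ve\|\nabla X_{t}^{\ve}\|_{2}^{2}$ rather than discard it: together with $-c\int_{\mcO}\psi(\nabla X_{t}^{\ve})\,dx$ it is exactly what dominates $-(c\wedge2)\,\vp^{\ve}(X_{t}^{\ve})$, and this is what keeps all constants $\ve$-independent.

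By Lemma \ref{lem:H1-bound} we have $A^{\ve}(X^{\ve})=\ve\D X^{\ve}+\div\phi(\nabla X^{\ve})\in L^{2}([0,T]\times\O;H)$, so \eqref{eq:vis_approx} is an identity in $H$ and It\^o's formula gives, $\P$-a.s. for all $t\in[0,T]$,
\begin{align*}
e^{-Ct}\|X_{t}^{\ve}\|_{H}^{2}={} & \|x_{0}\|_{H}^{2}-C\int_{0}^{t}e^{-Cr}\|X_{r}^{\ve}\|_{H}^{2}\,dr+2\int_{0}^{t}e^{-Cr}(X_{r}^{\ve},A^{\ve}(X_{r}^{\ve}))_{H}\,dr\\
 & +\int_{0}^{t}e^{-Cr}\|B(X_{r}^{\ve})\|_{L_{2}(H,H)}^{2}\,dr+2\int_{0}^{t}e^{-Cr}(X_{r}^{\ve},B(X_{r}^{\ve})\,dW_{r})_{H}.
\end{align*}
Integration by parts (legitimate since $X_{r}^{\ve}\in H^{2}\cap H_{0}^{1}$ and $\phi$ is Lipschitz with $\phi(0)=0$) gives $(v,A^{\ve}(v))_{H}=-\ve\|\nabla v\|_{2}^{2}-\int_{\mcO}\phi(\nabla v)\cdot\nabla v\,dx$, so by the last two lines of \eqref{eq:psi_cdt_mc} and $\psi\ge0$,
\[
(v,A^{\ve}(v))_{H}\le-2\cdot\frac{\ve}{2}\|\nabla v\|_{2}^{2}-c\int_{\mcO}\psi(\nabla v)\,dx+C\le-(c\wedge2)\,\vp^{\ve}(v)+C,
\]
while $\|B(v)\|_{L_{2}(H,H)}^{2}\le C_{0}(1+\|v\|_{H}^{2})$ with $C_{0}:=C\sum_{k}\mu_{k}$; choosing $C\ge C_{0}$ makes $-Ce^{-Cr}\|X_{r}^{\ve}\|_{H}^{2}+e^{-Cr}\|B(X_{r}^{\ve})\|_{L_{2}(H,H)}^{2}\le C_{0}$.

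Inserting these estimates, localizing by $\tau_{N}:=\inf\{t\ge0:\|X_{t}^{\ve}\|_{H}>N\}$ (these increase to $T$ $\P$-a.s. since $X^{\ve}$ has $H$-continuous paths, so the stochastic integral stopped at $\tau_{N}$ is a mean-zero martingale), and taking expectations yields
\[
\E\big[e^{-C(t\wedge\tau_{N})}\|X_{t\wedge\tau_{N}}^{\ve}\|_{H}^{2}\big]+2(c\wedge2)\,\E\int_{0}^{t\wedge\tau_{N}}e^{-Cr}\vp^{\ve}(X_{r}^{\ve})\,dr\le\E\|x_{0}\|_{H}^{2}+C.
\]
Passing $N\to\infty$ (Fatou for the first term, monotone convergence for the second since $\vp^{\ve}\ge0$) and then discarding either of the two nonnegative terms on the left gives, with $C$ independent of $\ve$, both $\sup_{t\in[0,T]}\E\|X_{t}^{\ve}\|_{H}^{2}\le C(\E\|x_{0}\|_{H}^{2}+1)$ and $\E\int_{0}^{T}e^{-Cr}\vp^{\ve}(X_{r}^{\ve})\,dr\le C(\E\|x_{0}\|_{H}^{2}+1)$; adding them proves the stated bound, and since $e^{-Cr}\ge e^{-CT}>0$ it follows in particular that $\vp^{\ve}(X^{\ve})\in L^{1}([0,T]\times\O)$. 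The argument is otherwise routine; the one thing that genuinely requires care is the $\ve$-uniformity of the constants, and that is precisely what retaining the viscous term in the coercivity estimate secures.
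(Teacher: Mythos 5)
Your proposal is correct and follows essentially the same route as the paper: apply It\^o's formula to $e^{-Ct}\|X_{t}^{\ve}\|_{H}^{2}$, use the coercivity in \eqref{eq:psi_cdt_mc} while retaining the viscous term so that the drift contribution is bounded by $-c'\vp^{\ve}(X_{t}^{\ve})+C$ with $\ve$-independent constants, and choose the weight exponent large enough to absorb the linear-growth bound on $\|B\|_{L_{2}(H,H)}^{2}$ --- this is exactly the paper's estimate \eqref{eq:coerc_of_Aeps} followed by ``choosing $K$ large enough.'' The only cosmetic difference is that you justify the It\^o formula via the $H^{2}$-regularity from Lemma \ref{lem:H1-bound} and an explicit localization, whereas the paper works directly with the variational duality pairing $_{V^{*}}\langle A^{\ve}(v),v\rangle_{V}$.
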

\begin{proof}
Note that, using \eqref{eq:psi_cdt_mc}
\begin{align}
\ _{V^{*}}\<A^{\ve}(v),v\>_{V} & =-\int_{\mcO}\left(\ve|\nabla v|^{2}+\phi(\nabla v)\cdot\nabla v\right)dx\nonumber \\
 & \le-\int_{\mcO}\left(\ve|\nabla v|^{2}+c\psi(\nabla v)+C\right)dx\label{eq:coerc_of_Aeps}\\
 & \le-c\vp^{\ve}(v)+C,\nonumber 
\end{align}
for all $v\in V$. By Itô's formula we have
\begin{align*}
 & \E e^{-Kt}\|X_{t}^{\ve}\|_{H}^{2}\\
= & \E\|x_{0}^{\ve}\|_{H}^{2}+2\E\int_{0}^{t}e^{-Kr}\ _{V^{*}}\<A^{\ve}(X_{r}^{\ve}),X_{r}^{\ve}\>_{V}+e^{-Kr}\|B(X_{r}^{\ve})\|_{L_{2}(H,H)}^{2}dr\\
 & -K\int_{0}^{t}e^{-Kr}\|X_{r}^{\ve}\|_{H}^{2}dr\\
\le & \E\|x_{0}^{\ve}\|_{H}^{2}-2\E\int_{0}^{t}ce^{-Kr}\vp^{\ve}(X_{r}^{\ve})+Ce^{-Kr}\|X_{r}^{\ve}\|_{H}^{2}dr-K\int_{0}^{t}e^{-Kr}\|X_{r}^{\ve}\|_{H}^{2}dr+C.
\end{align*}
Choosing $K$ large enough yields the claim.
\end{proof}

Based on the strong solution property of $X^{\ve}$ we derive the key estimate in the following
\begin{lem}
\label{lem:strong_approx}Let $x_{0}\in L^{2}(\O;H_{0}^{1})$. For all $\ve>0$ we have
\begin{align}
 & \E t\vp^{\ve}(X_{t}^{\ve})+\E\int_{0}^{t}r\|\ve\D X_{r}^{\ve}+\div\phi(\nabla X_{r}^{\ve})\|_{H}^{2}dr\label{eq:approx_strong_H}\\
 & \le\ve C\left(\E\|x_{0}\|_{H_{0}^{1}}^{2}+1\right)+C\left(\E\|x_{0}\|_{H}^{2}+1\right).\nonumber 
\end{align}
and
\begin{equation}
\E\vp^{\ve}(X_{t}^{\ve})+\E\int_{0}^{t}\|\ve\D X_{r}^{\ve}+\div\phi(\nabla X_{r}^{\ve})\|_{H}^{2}dr\le\E\vp^{\ve}(x_{0})+C,\label{approx_strong_S}
\end{equation}
for some constant $C>0$.\end{lem}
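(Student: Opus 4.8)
The plan is to apply It\^o's formula to the functional $\vp^{\ve}$ along the process $X^{\ve}$, exploiting the strong solution property established in Lemma \ref{lem:H1-bound} which guarantees $X^{\ve}\in L^{2}([0,T]\times\Omega;H^{2}\cap H_{0}^{1})$, so that $A^{\ve}(X^{\ve})=\ve\D X^{\ve}+\div\phi(\nabla X^{\ve})$ is a genuine $H$-valued process. Since $\vp^{\ve}\in C^{2}(H_{0}^{1}\cap H^{2})$ with Lipschitz derivatives, It\^o's formula for $\vp^{\ve}(X^{\ve}_t)$ yields, schematically,
\begin{align*}
\vp^{\ve}(X_{t}^{\ve}) &= \vp^{\ve}(x_{0}) - \int_{0}^{t}\|A^{\ve}(X_{r}^{\ve})\|_{H}^{2}\,dr + \int_{0}^{t}D\vp^{\ve}(X_{r}^{\ve})(B(X_{r}^{\ve})dW_{r}) \\
&\quad + \frac{1}{2}\int_{0}^{t}\sum_{k}D^{2}\vp^{\ve}(X_{r}^{\ve})(g^{k}(\cdot,X_{r}^{\ve}),g^{k}(\cdot,X_{r}^{\ve}))\,dr,
\end{align*}
using $D\vp^{\ve}(v)(A^{\ve}(v)) = -\|A^{\ve}(v)\|_H^2$ which holds because $A^{\ve}(v) = -\partial_{H_0^1}\vp^{\ve}(v)$ is the $H$-gradient when it lies in $H$. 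Taking expectations kills the martingale term (its integrability must be checked using the $L^2$-bound on $X^\ve$ and the linear growth of $B$), giving the identity with $\E\vp^{\ve}(X_t^{\ve}) + \E\int_0^t\|A^{\ve}(X_r^{\ve})\|_H^2\,dr$ on one side.

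The main work is to control the It\^o correction term $\frac12\E\int_0^t\sum_k D^2\vp^{\ve}(X_r^{\ve})(g^k,g^k)\,dr$. Using the explicit formula $D^{2}\vp^{\ve}(v)(g,h) = \ve\int_{\mcO}\nabla h\cdot\nabla g\,dx + \int_{\mcO}\nabla h\cdot D\phi(\nabla v)\nabla g\,dx$ with $g=h=g^{k}(\cdot,v)$ and $\nabla g^{k}(\cdot,v) = \nabla_x g^k(\cdot,v) + \partial_r g^k(\cdot,v)\nabla v$, one expands into terms involving $|\nabla_x g^k|^2$, $|\partial_r g^k|^2|\nabla v|^2$, cross terms, and the corresponding $D\phi$-weighted versions. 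The nonnegativity and radial structure of $\psi$ (hence the structure of $D\phi$) together with the bound $|D\phi(\xi)||\xi|^2 \le C(1+\psi(\xi))$ from \eqref{eq:psi_cdt_mc} let us dominate the $D\phi$-weighted quadratic terms by $C(1+\psi(\nabla v))$ pointwise, up to factors of $\mu_k$; the $\ve$-Laplacian part of $D^2\vp^\ve$ contributes $\ve\|\nabla g^k(\cdot,v)\|_2^2 \lesssim \ve\mu_k(1+\|v\|_{H_0^1}^2)$. Summing over $k$ and using $\sum_k\mu_k<\infty$ gives a bound of the form $C(1+\vp^{\ve}(X_r^{\ve}) + \|X_r^{\ve}\|_H^2)$ for the correction integrand (with, crucially, a constant $C$ independent of $\ve$, since the $\ve$-dependent piece is separately controlled by Lemma \ref{lem:H1-bound}). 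I expect the careful bookkeeping of these cross terms --- making sure the potentially dangerous term $\int_{\mcO}\nabla v\cdot D\phi(\nabla v)\nabla v\,dx$ is absorbed via $|D\phi(\xi)||\xi|^2\le C(1+\psi(\xi))$ rather than producing an uncontrolled $\|X^\ve\|_{H^2}$-type quantity --- to be the key technical point.

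To close, I would absorb the $\vp^{\ve}(X_r^{\ve})$ term: for \eqref{approx_strong_S} one works with $e^{-Cr}\vp^{\ve}(X_r^{\ve})$, applying It\^o's formula to $e^{-Kt}\vp^{\ve}(X_t^{\ve})$ with $K$ large enough that $-Ke^{-Kr}\vp^{\ve}(X_r^{\ve}) + Ce^{-Kr}\vp^{\ve}(X_r^{\ve}) \le 0$, then using $\vp^{\ve}(X_t^\ve)\ge 0$ and the $L^\infty_t L^2$-bound on $X^\ve$ from Lemma \ref{lem:H-bound} to conclude; reverting to the original without the exponential weight introduces only the extra additive constant $C$. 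For the time-weighted estimate \eqref{eq:approx_strong_H}, I would instead apply It\^o's formula to $t\,\vp^{\ve}(X_t^{\ve})$ (equivalently to the product $t\mapsto t$ and $\vp^\ve(X_t^\ve)$), picking up the extra term $\E\int_0^t\vp^{\ve}(X_r^{\ve})\,dr$, which is exactly what Lemma \ref{lem:H-bound} bounds by $C(\E\|x_0\|_H^2+1)$ after the exponential weight is removed; the term $\E t\vp^{\ve}(X_t^\ve)$ starts at $0$ so no regularity on $\vp^\ve(x_0)$ is needed, which is why \eqref{eq:approx_strong_H} holds for all $x_0\in L^2(\O;H_0^1)$ while \eqref{approx_strong_S} carries the $\E\vp^{\ve}(x_0)$ term. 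Combining with the $\ve$-dependent $H^2$-bound of Lemma \ref{lem:H1-bound} for the genuinely $\ve$-proportional contributions gives the stated right-hand sides with constants independent of $\ve$.
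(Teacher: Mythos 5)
Your overall strategy is the paper's: apply It\^o's formula to $\vp^{\ve}(X^{\ve}_t)$ and $t\,\vp^{\ve}(X^{\ve}_t)$, use the cancellation $D\vp^{\ve}(v)(A^{\ve}(v))=-\|A^{\ve}(v)\|_H^2$, dominate the second-order correction via $|D\phi(\xi)||\xi|^2\le C(1+\psi(\xi))$ together with $\sum_k\mu_k<\infty$, and close with Lemmas \ref{lem:H1-bound} and \ref{lem:H-bound} (the paper absorbs $\E\int_0^t r\vp^{\ve}(X_r^{\ve})dr$ directly by Lemma \ref{lem:H-bound} rather than by an exponential weight, but that is a cosmetic difference). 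The identification of which estimate needs $\E\vp^{\ve}(x_0)$ and which does not is also correct.

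There is, however, a genuine gap at the very first step: you apply It\^o's formula directly to $\vp^{\ve}(X_t^{\ve})$, justified only by $\vp^{\ve}\in C^2(H_0^1\cap H^2)$ and $X^{\ve}\in L^2([0,T]\times\O;H^2\cap H_0^1)$. This is not enough. The process $X^{\ve}$ is an $H$-valued semimartingale (a variational solution in the triple $H_0^1\subset H\subset H^{-1}$), and the standard infinite-dimensional It\^o formula requires the functional to be $C^2$ \emph{on $H$} with suitable continuity of the derivatives; $\vp^{\ve}$ is not even finite outside $H_0^1$, and membership of $X^{\ve}$ in $H^2\cap H_0^1$ for $dt\otimes d\P$-a.e.\ $(t,\omega)$ does not control the increments of the process in the topology where $D\vp^{\ve},D^2\vp^{\ve}$ are continuous. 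This is precisely the obstruction the paper's proof is built around: it replaces $\vp^{\ve}$ by $\vp^{\ve,\l}:=\vp^{\ve}\circ J^{\l}$ with $J^{\l}=(1-\l\D)^{-1}$, which \emph{is} $C^2$ on all of $H$ with Lipschitz derivatives because $J^{\l}:H\to H^2\cap H_0^1$ is bounded linear, applies It\^o's formula to $t\,\vp^{\ve,\l}(X_t^{\ve})$, and then spends the bulk of the argument passing to the limit $\l\to0$ in each term (using $J^{\l}v\to v$ in $H^2\cap H_0^1$, the Lipschitz continuity of $\phi$ and $D\phi$, the embedding $H^1\hookrightarrow L^3$ for $d\le6$, and dominated convergence in $k$). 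Your plan omits this regularization and limit procedure entirely, so as written the central identity of the proof is unjustified; the subsequent estimates are fine once that identity is secured.
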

\begin{proof}
We first prove \eqref{eq:approx_strong_H}: Let $J^{\l}$ be the resolvent of $-\D$ on $L^{2}$, i.e. $J^{\l}:=(1-\l\D)^{-1}$. We define $\vp^{\ve,\l}:=\vp^{\ve}\circ J^{\l}.$ Since $J^{\l}:H\to H^{2}\cap H_{0}^{1}$ is a linear, continuous operator we have $\vp^{\ve,\l}\in C^{2}(H)$ with Lipschitz continuous derivatives (cf. \eqref{eq:lip_derivatives}) given by
\begin{align*}
D\vp^{\ve,\l}(v)(h) & =D\vp^{\ve}(J^{\l}v)(J^{\l}h)\\
 & =\int_{\mcO}\left(\ve(\nabla J^{\l}v)\cdot(\nabla J^{\l}h)+\phi(\nabla J^{\l}v)\cdot\nabla J^{\l}h\right)dx\\
 & =-\left(\ve\D J^{\l}v+\div\phi(\nabla J^{\l}v),J^{\l}h\right)_{H},\\
D^{2}\vp^{\ve,\l}(v)(g,h) & =D^{2}\vp^{\ve}(J^{\l}v)(J^{\l}g,J^{\l}h)\\
 & =\int_{\mcO}\ve(\nabla J^{\l}h)\cdot(\nabla J^{\l}g)dx+\int_{\mcO}(\nabla J^{\l}h)\cdot D\phi(\nabla J^{\l}v)(\nabla J^{\l}g)dx.
\end{align*}
For \eqref{eq:approx_strong_H}: We apply Itô's formula to $t\vp^{\ve,\l}(X_{t}^{\ve})$ to get:
\begin{align*}
 & \E t\vp^{\ve,\l}(X_{t}^{\ve})\\
 & =\E\int_{0}^{t}r(D\vp^{\ve,\l}(X_{r}^{\ve}),\ve\D X_{r}^{\ve}+\div\phi(\nabla X_{r}^{\ve}))_{H}dr\\
 & +\frac{1}{2}\E\int_{0}^{t}rTr[D^{2}\vp^{\ve,\l}(X_{r}^{\ve})B(X_{r}^{\ve})B^{*}(X_{r}^{\ve})]dr+\E\int_{0}^{t}\vp^{\ve,\l}(X_{r}^{\ve})dr\\
 & =-\E\int_{0}^{t}r(\ve\D J^{\l}(X_{r}^{\ve})+\div\phi(\nabla J^{\l}X_{r}^{\ve}),\ve J^{\l}\D X_{r}^{\ve}+J^{\l}\div\phi(\nabla X_{r}^{\ve}))_{H}dr\\
 & +\frac{\ve}{2}\sum_{k=1}^{\infty}\E\int_{0}^{t}r\int_{\mcO}|\nabla J^{\l}g^{k}(x,X_{r}^{\ve}(x))|^{2}dxdr\\
 & +\frac{1}{2}\sum_{k=1}^{\infty}\E\int_{0}^{t}r\int_{\mcO}(\nabla J^{\l}g^{k}(x,X_{r}^{\ve}(x)))\cdot D\phi(\nabla J^{\l}X_{r}^{\ve}(x))(\nabla J^{\l}g^{k}(x,X_{r}^{\ve}(x)))dxdr\\
 & +\E\int_{0}^{t}\vp^{\ve,\l}(X_{r}^{\ve})dr.
\end{align*}
We first note that
\begin{align*}
\int_{\mcO}|\nabla J^{\l}g^{k}(x,X_{r}^{\ve}(x))|^{2}dx & \le\int_{\mcO}|\nabla g^{k}(x,X_{r}^{\ve}(x))|^{2}dx\\
 & \le C\int_{\mcO}|\nabla_{x}g^{k}(x,X_{r}^{\ve}(x))+\partial_{r}g^{k}(x,X_{r}^{\ve}(x))\nabla X_{r}^{\ve}(x)|^{2}dx\\
 & \le C\mu_{k}(1+\|X_{r}^{\ve}\|_{H_{0}^{1}}^{2}).
\end{align*}
Moreover,
\begin{align*}
|(\nabla J^{\l}g^{k}(\cdot,v))\cdot D\phi(\nabla J^{\l}v)(\nabla J^{\l}g^{k}(\cdot,v))| & \le|\nabla J^{\l}g^{k}(\cdot,v)|^{2}|D\phi|(\nabla J^{\l}v).
\end{align*}
We note $J^{\l}v\to v$ in $H_{0}^{1}\cap H^{2}$ for $v\in H_{0}^{1}\cap H^{2}$ and thus $\nabla J^{\l}v\to\nabla v$ in $ $$H^{1}$ for $\l\to0$. Since $D\phi$ is Lipschitz we have $|D\phi(\nabla J^{\l}v)-D\phi(\nabla v)|\to0$ in $L^{2}([0,T]\times\O;H^{1})$ for all $v\in L^{2}([0,T]\times\O;H_{0}^{1}\cap H^{2})$ for $\l\to0$. Moreover, $|\nabla J^{\l}g^{k}(\cdot,v)-\nabla g^{k}(\cdot,v)|\to0$ in $L^{2}([0,T]\times\O;H)$ for $\l\to0$. Hence, (using $H^{1}\hookrightarrow L^{3}$, $D\phi\in C_{b}^{0}$ and \eqref{eq:psi_cdt_mc})
\begin{align*}
 & \lim_{\l}\E\int_{0}^{t}r\int_{\mcO}(\nabla J^{\l}g^{k}(\cdot,v_{r}))\cdot D\phi(\nabla J^{\l}v_{r})(\nabla J^{\l}g^{k}(\cdot,v_{r}))dxdr\\
 & \le\E\int_{0}^{t}r\int_{\mcO}|\nabla g^{k}(\cdot,v_{r}))|^{2}|D\phi|(\nabla v_{r})dxdr\\
 & \le C\E\int_{0}^{t}r\int_{\mcO}\left(|\nabla_{x}g^{k}(\cdot,v_{r})|^{2}+|\partial_{r}g^{k}(\cdot,v_{r})\nabla v_{r}|^{2}\right)|D\phi|(\nabla v_{r})dxdr\\
 & \le C\mu_{k}\E\int_{0}^{t}r\int_{\mcO}(1+|v_{r}|^{2}+|\nabla v_{r}|^{2})|D\phi|(\nabla v_{r})dxdr\\
 & \le C\mu_{k}\E\int_{0}^{t}r\int_{\mcO}(1+|v_{r}|^{2}+\psi(\nabla v_{r}))dxdr\\
 & \le C\mu_{k}\left(1+\E\int_{0}^{t}r\|v_{r}\|_{H}^{2}dr+\E\int_{0}^{t}r\vp^{\ve}(v_{r})dr\right),
\end{align*}
for all $v\in L^{2}([0,T]\times\O;H_{0}^{1}\cap H^{2})$. Moreover,
\begin{align*}
 & \left|\E\int_{0}^{t}r\int_{\mcO}(\nabla J^{\l}g^{k}(\cdot,v_{r}))\cdot D\phi(\nabla J^{\l}(v_{r}))(\nabla J^{\l}g^{k}(\cdot,v))dxdr\right|\\
 & \le C\mu_{k}\E\int_{0}^{t}r(1+\|v_{r}\|_{H_{0}^{1}}^{2})dr
\end{align*}
and the right hand side is summable in $k$. Hence, dominated convergence applies and we obtain 
\begin{align*}
 & \lim_{\l\to0}\frac{\ve}{2}\sum_{k=1}^{\infty}\E\int_{0}^{t}r\int_{\mcO}|\nabla J^{\l}g^{k}(x,X_{r}^{\ve}(x))|^{2}dxdr\\
 & +\lim_{\l\to0}\frac{1}{2}\sum_{k=1}^{\infty}\E\int_{0}^{t}r\int_{\mcO}(\nabla J^{\l}g^{k}(x,X_{r}^{\ve}(x)))\cdot D\phi(\nabla J^{\l}(X_{r}^{\ve}(x)))(\nabla J^{\l}g^{k}(x,X_{r}^{\ve}(x)))dxdr\\
 & \le C\left(1+\frac{\ve}{2}\E\int_{0}^{t}r(1+\|X_{r}^{\ve}\|_{H_{0}^{1}}^{2})dr+\E\int_{0}^{t}r\|X_{r}^{\ve}\|_{H}^{2}dr+\E\int_{0}^{t}r\vp^{\ve}(X_{r}^{\ve})dr\right).
\end{align*}
Since $X^{\ve}\in L^{2}([0,T]\times\Omega;H^{2}\cap H_{0}^{1})$ we have (using dominated convergence) $\nabla J^{\l}X^{\ve}\to\nabla X^{\ve}$ in $L^{2}([0,T]\times\Omega;H^{1})$ for $\l\to0$. Since $\phi,D\phi$ are Lipschitz this implies $\phi(\nabla J^{\l}X^{\ve})\to\phi(\nabla X^{\ve})$ for $\l\to0$ in $L^{2}([0,T]\times\Omega;H^{1})$. Moreover, $\D J^{\l}X^{\ve}=J^{\l}\D X^{\ve}\to\D X^{\ve}$ and $J^{\l}\div\phi(\nabla X_{r}^{\ve})\to\div\phi(\nabla X_{r}^{\ve})$ in $L^{2}([0,T]\times\Omega;H)$. Hence, we obtain
\begin{align*}
 & \lim_{\l\to0}-\E\int_{0}^{t}r(\ve\D J^{\l}(X_{r}^{\ve})+\div\phi(\nabla J^{\l}X_{r}^{\ve}),\ve J^{\l}\D X_{r}^{\ve}+J^{\l}\div\phi(\nabla X_{r}^{\ve}))_{H}dr\\
 & =-\E\int_{0}^{t}r\|\ve\D X_{r}^{\ve}+\div\phi(\nabla X_{r}^{\ve})\|_{H}^{2}dr.
\end{align*}
Since $\vp^{\ve,\l}(X_{r}^{\ve})=\vp^{\ve}(J_{\l}X_{r}^{\ve})$, $J_{\l}X^{\ve}\to X^{\ve}$ in $L^{2}([0,T]\times\Omega;H^{2}\cap H_{0}^{1})$, $ $ $J_{\l}X_{t}^{\ve}\to X_{t}^{\ve}$ in $L^{2}(\Omega;H_{0}^{1})$ for all $t\in[0,T]$ and since $\vp^{\ve}$ is continuous on $H_{0}^{1}$ and 
\[
|\vp^{\ve,\l}(X_{t}^{\ve})|\le C(1+\|X_{t}^{\ve}\|_{H_{0}^{1}}^{2}),
\]
by Lebesgue's dominated convergence theorem we get
\begin{align*}
 & \lim_{\l\to0}\E t\vp^{\ve,\l}(X_{t}^{\ve})=\E t\vp^{\ve}(X_{t}^{\ve})
\end{align*}
and 
\[
\lim_{\l\to0}\E\int_{0}^{t}\vp^{\ve,\l}(X_{r}^{\ve})dr=\E\int_{0}^{t}\vp^{\ve}(X_{r}^{\ve})dr.
\]
Putting these estimates together yields
\begin{align*}
\E t\vp^{\ve}(X_{t}^{\ve}) & \le-\E\int_{0}^{t}r\|\ve\D X_{r}^{\ve}+\div\phi(\nabla X_{r}^{\ve})\|_{H}^{2}dr\\
 & +C\left(1+\ve\E\int_{0}^{t}r(1+\|X_{r}^{\ve}\|_{H_{0}^{1}}^{2})dr+\E\int_{0}^{t}r\|X_{r}^{\ve}\|_{H}^{2}dr+\E\int_{0}^{t}r\vp^{\ve}(X_{r}^{\ve})dr\right)\\
 & +\E\int_{0}^{t}\vp^{\ve}(X_{r}^{\ve})dr.
\end{align*}
By Lemma \ref{lem:H1-bound} and Lemma \ref{lem:H-bound} we conclude
\begin{align*}
 & \E t\vp^{\ve}(X_{t}^{\ve})+\E\int_{0}^{t}r\|\ve\D X_{r}^{\ve}+\div\phi(\nabla X_{r}^{\ve})\|_{H}^{2}dr\\
 & \le\ve C\left(\E\|x_{0}\|_{H_{0}^{1}}^{2}+1\right)+C\left(\E\|x_{0}\|_{H}^{2}+1\right).
\end{align*}
To prove \eqref{approx_strong_S} we proceed as above but applying Itô's formula for $\vp^{\ve,\l}(X_{t}^{\ve})$ instead of $t\vp^{\ve,\l}(X_{t}^{\ve})$.
\end{proof}

\begin{proof}
[Proof of Theorem \ref{thm:mc-pinned}:]\textbf{ Step 1: $x_{0}\in L^{2}(\O;H_{0}^{1})$}

For $\ve_{1},\ve_{2}>0$ let $X^{\ve_{1}},X^{\ve_{2}}$ be two solutions to \eqref{eq:vis_approx} with initial conditions $x_{0}^{1},x_{0}^{2}\in L^{2}(\O;H_{0}^{1})$ respectively. Itô's formula implies
\begin{align*}
 & e^{-Kt}\|X_{t}^{\ve_{1}}-X_{t}^{\ve_{2}}\|_{H}^{2}\\
 & =\|x_{0}^{1}-x_{0}^{2}\|_{H}^{2}\\
 & +\int_{0}^{t}2e^{-Kr}\ _{V^{*}}\<\ve_{1}\D X_{r}^{\ve_{1}}+\div\phi(\nabla X_{r}^{\ve_{1}})-\left(\ve_{2}\D X_{r}^{\ve_{2}}+\div\phi(\nabla X_{r}^{\ve_{2}})\right),X_{r}^{\ve_{1}}-X_{r}^{\ve_{2}}\>_{V}dr\\
 & +\int_{0}^{t}2e^{-Kr}(X_{r}^{\ve_{1}}-X_{r}^{\ve_{2}},B(X_{r}^{\ve_{1}})-B(X_{r}^{\ve_{2}}))_{H}dW\\
 & +\int_{0}^{t}e^{-Kr}\|B(X_{r}^{\ve_{1}})-B(X_{r}^{\ve_{2}})\|_{L_{2}(U,H)}^{2}dr-K\E\int_{0}^{t}e^{-Kr}\|X_{r}^{\ve_{1}}-X_{r}^{\ve_{2}}\|_{H}^{2}dr.
\end{align*}
Since
\[
\ _{V^{*}}\<\div\phi(\nabla X_{r}^{\ve_{1}})-\div\phi(\nabla X_{r}^{\ve_{2}}),X_{r}^{\ve_{1}}-X_{r}^{\ve_{2}}\>_{V}\le0
\]
and
\[
\ _{V^{*}}\<\ve_{1}\D X_{r}^{\ve_{1}}-\ve_{2}\D X_{r}^{\ve_{1}},X_{r}^{\ve_{1}}-X_{r}^{\ve_{2}}\>_{V}\le2(\ve_{1}+\ve_{2})(\|X_{r}^{\ve_{1}}\|_{H_{0}^{1}}^{2}+\|X_{r}^{\ve_{2}}\|_{H_{0}^{1}}^{2}),
\]
we obtain
\begin{align*}
 & e^{-Kt}\|X_{t}^{\ve_{1}}-X_{t}^{\ve_{2}}\|_{H}^{2}\\
 & \le\|x_{0}^{1}-x_{0}^{2}\|_{H}^{2}+4(\ve_{1}+\ve_{2})\int_{0}^{t}e^{-Kr}(\|X_{r}^{\ve_{1}}\|_{H_{0}^{1}}^{2}+\|X_{r}^{\ve_{2}}\|_{H_{0}^{1}}^{2})dr\\
 & +\int_{0}^{t}2e^{-Kr}(X_{r}^{\ve_{1}}-X_{r}^{\ve_{2}},B(X_{r}^{\ve_{1}})-B(X_{r}^{\ve_{2}}))_{H}dW\\
 & +\int_{0}^{t}e^{-Kr}\|B(X_{r}^{\ve_{1}})-B(X_{r}^{\ve_{2}})\|_{L_{2}(U,H)}^{2}dr-K\E\int_{0}^{t}e^{-Kr}\|X_{r}^{\ve_{1}}-X_{r}^{\ve_{2}}\|_{H}^{2}dr.
\end{align*}
Using the Burkholder-Davis-Gundy inequality, Lemma \ref{lem:H1-bound} and choosing $K$ large enough implies
\begin{align}
\E\sup_{t\in[0,T]}\|X_{t}^{\ve_{1}}-X_{t}^{\ve_{2}}\|_{H}^{2}\le & C\E\|x_{0}^{1}-x_{0}^{2}\|_{H}^{2}\label{eq:approx_stab}\\
 & +(\ve_{1}+\ve_{2})C\left(\E\|x_{0}^{1}\|_{H_{0}^{1}}^{2}+\E\|x_{0}^{2}\|_{H_{0}^{1}}^{2}+1\right).\nonumber 
\end{align}
Now considering $X^{\ve}$ to be a solution to \eqref{eq:vis_approx} with initial condition $x_{0}\in L^{2}(\O;H_{0}^{1})$ for all $\ve>0$ yields 
\[
X^{\ve}\to X\quad\text{in }L^{2}(\O;C([0,T];H)),
\]
for $\ve\to0$. Due to Lemma \ref{lem:H1-bound} we have 
\[
\E\sup_{t\in[0,T]}e^{-Ct}\|X_{t}\|_{H_{0}^{1}}^{2}\le C\left(\E\|x_{0}\|_{H_{0}^{1}}^{2}+1\right).
\]
For two initial conditions $x_{0}^{1},x_{0}^{2}\in H_{0}^{1}$ and respective limits $X^{1},X^{2}$, \eqref{eq:approx_stab} then yields 
\begin{align}
 & \E\sup_{t\in[0,T]}\|X_{t}^{1}-X_{t}^{2}\|_{H}^{2}\le C\E\|x_{0}^{1}-x_{0}^{2}\|_{H}^{2}.\label{eq:stab_1}
\end{align}
It remains to identify $X$ as a strong solution to \eqref{eq:MC-pinned}. By Lemma \ref{lem:strong_approx} we have $\ve\D X^{\ve}+\div\phi(\nabla X^{\ve})$ uniformly bounded in $L^{2}([0,T]\times\O;H)$. Hence, there is an $\eta\in L^{2}([0,T]\times\O;H)$ and we can choose a sequence $\ve_{n}$ such that
\[
\ve_{n}\D X^{\ve_{n}}+\div\phi(\nabla X^{\ve_{n}})\rightharpoonup\eta,\quad\text{in }L^{2}([0,T]\times\O;H).
\]
We now aim to identify $\eta\in-\partial\vp(X)$, $dt\otimes d\P$-almost everywhere. By the subgradient property
\[
(\partial\vp^{\ve}(X_{t}^{\ve}),z-X_{t}^{\ve})_{2}+\vp^{\ve}(X_{t}^{\ve})-\vp^{\ve}(z)\le0
\]
for all $z\in L^{2}$. Since $X_{t}^{\ve}\in H_{0}^{1}\cap H^{2}$ we have $\partial\vp^{\ve}(X_{t}^{\ve})=-\ve\D X^{\ve}-\div\phi(\nabla X^{\ve})\in L^{2}$, $dt\otimes d\P$-almost everywhere. Integration yields 
\[
\E\int_{0}^{T}\t\left[-(\ve\D X^{\ve}+\div\phi(\nabla X^{\ve}),z-X^{\ve})_{2}+\vp^{\ve}(X^{\ve})-\vp^{\ve}(z)\right]dt\le0
\]
for all $z\in L^{2}$ and all non-negative $\t\in L^{\infty}([0,T]\times\O)$. Taking the limit yields
\[
\E\int_{0}^{T}\t\left[(-\eta,z-X)_{2}+\vp(X)-\vp(z)\right]dt\le0
\]
and thus
\[
(-\eta,z-X)_{2}+\vp(X)-\vp(z)\le0,
\]
for all $z\in L^{2}(\mcO)$, $dt\otimes d\P$-almost everywhere. Thus, $\eta\in-\partial\vp(X)$, $dt\otimes d\P$-almost everywhere. Since $\eta\in H$ and $X\in H_{0}^{1}$ $dt\otimes d\P$-a.e. by \eqref{eq:H1-subgradient} we have $\eta=\div\phi(\nabla X)$.  It is now easy to deduce that $X$ is a strong solution to \eqref{eq:MC-pinned} and $X$ satisfies
\begin{align*}
 & \E t\vp(X_{t})+\E\int_{0}^{t}r\|\eta_{r}\|_{H}^{2}dr\le C\left(\E\|x_{0}\|_{H}^{2}+1\right).
\end{align*}
and
\begin{equation}
\E\vp(X_{t})+\E\int_{0}^{t}\|\eta_{r}\|_{H}^{2}dr\le\E\vp(x_{0})+C,\label{approx_strong_S-2}
\end{equation}
for some constant $C>0$ (independent of $x_{0}$).

\textbf{Step 2: $x_{0}\in L^{2}(\O;H)$ }with $\E\vp(x_{0})<\infty$

By \cite[Proposition 8.2]{BR13}, for $v\in H_{0}^{1}$ we have 
\begin{align*}
\vp(J^{\l}v) & =\int_{\mcO}\psi(\nabla J^{\l}v)dx\\
 & \le\int_{\mcO}\psi(\nabla v)dx\\
 & =\vp(v),
\end{align*}
where $J^{\l}:=(1-\l\D)^{-1}$. Since $\vp$ is the lower-semicontinuous hull of $\vp_{|W_{0}^{1,1}\cap L^{2}}$ and thus of $\vp_{|H_{0}^{1}}$, for every $v\in H$ there is a sequence $v^{n}\in H_{0}^{1}$ with $v^{n}\to v$ in $H$ and $\vp(v^{n})\to\vp(v)$. Hence, 
\begin{align*}
\vp(J^{\l}v) & \le\vp(v),
\end{align*}
for all $v\in H$. We set $x_{0}^{n}:=J^{\frac{1}{n}}x_{0}$ and obtain 
\begin{align}
\E\vp(x_{0}^{n})+\E\|x_{0}^{n}\|_{2}^{2} & \le\E\vp(x_{0})+\E\|x_{0}\|_{2}^{2}<\infty.\label{eq:phi_ic_approx}
\end{align}
For $n,m>0$ let $X^{n},X^{m}$ be two solutions to \eqref{eq:MC-pinned} with $x_{0}=x_{0}^{n},x_{0}^{m}$ respectively as constructed in Step 1. From \eqref{eq:stab_1} we obtain
\begin{align}
 & \E\sup_{t\in[0,T]}\|X_{t}^{n}-X_{t}^{m}\|_{H}^{2}\le C\E\|x_{0}^{n}-x_{0}^{m}\|_{H}^{2}\label{eq:stab_1-1}
\end{align}
and thus
\[
X^{n}\to X\quad\text{in }L^{2}(\O;C([0,T];H)),
\]
for $n\to\infty$. Moreover, we have the uniform estimates
\begin{align}
\E t\vp(X_{t}^{n})+\E\int_{0}^{t}r\|\eta_{r}^{n}\|_{H}^{2}dr & \le C\left(\E\|x_{0}^{n}\|_{H}^{2}+1\right)\label{eq:approx_ineq}\\
 & \le C\left(\E\|x_{0}\|_{H}^{2}+1\right)\nonumber 
\end{align}
and
\begin{align}
\E\vp(X_{t}^{n})+\E\int_{0}^{t}\|\eta_{r}^{n}\|_{H}^{2}dr & \le\E\vp(x_{0}^{n})+C,\label{eq:approx_ineq-1}\\
 & \le\E\vp(x_{0})+C.\nonumber 
\end{align}
This allows the extraction of a subsequence and an $\eta\in L^{2}([0,T]\times\O;H)$ such that
\[
\eta^{n}\rightharpoonup\eta,\quad\text{in }L^{2}([0,T]\times\O;H).
\]
We may identify $\eta\in-\partial\vp(X)$ as in Step 1: Since $\eta^{n}\in-\partial\vp(X^{n})$, $dt\otimes d\P$-a.e.
\[
(-\eta^{n},z-X^{n})_{2}+\vp(X^{n})-\vp(z)\le0
\]
for all $z\in L^{2}$. Integrating against a non-negative testfunction $\t\in L^{\infty}([0,T]\times\O)$ and taking the limit $n\to\infty$ yields
\[
\E\int_{0}^{T}\t\left[(-\eta,z-X)_{2}+\vp(X)-\vp(z)\right]dt\le0
\]
and thus $\eta\in-\partial\vp(X)$, $dt\otimes d\P$-almost everywhere. Note that $-\partial\vp(v)=\div\phi(\nabla v)$ is known only for $v\in H_{0}^{1}$ which does not apply do $X$ in general. Taking the limit $n\to\infty$ in \eqref{eq:approx_ineq}, \eqref{eq:approx_ineq-1} yields the claim.

\textbf{Step 3: $x_{0}\in L^{2}(\O;H)$}

Let \textbf{$x_{0}^{n}\in L^{2}(\O;H_{0}^{1})$ }with $x_{0}^{n}\to x_{0}$ in $L^{2}(\O;H)$, $\E\|x_{0}^{n}\|_{H}^{2}\le\E\|x_{0}\|_{H}^{2}$ and let $X^{n}$ be the corresponding strong solutions constructed in step one. By \eqref{eq:stab_1} we have
\[
\E\sup_{t\in[0,T]}\|X^{n}-X^{m}\|_{H}^{2}\le C\E\|x_{n}-x_{m}\|_{H}^{2}.
\]
Hence, $X^{n}\to X$ in $L^{2}(\O;C([0,T];H))$. Moreover, 
\begin{align*}
 & \E t\vp(X_{t}^{n})+\E\int_{0}^{t}r\|\eta_{r}^{n}\|_{H}^{2}dr\le C\left(\E\|x_{0}\|_{H}^{2}+1\right).
\end{align*}
Hence, there is a map $\eta$ with $\eta\in L^{2}([\tau,T]\times\O;H)$ such that
\[
\eta^{n}\rightharpoonup\eta,\quad\text{in }L^{2}([\tau,T]\times\O;H),
\]
for all $\tau>0$. We may then prove $\eta\in-\partial\vp(X)$ as in Step 1. Hence, $X$ is a generalized strong solution satisfying 
\begin{align*}
 & \E t\vp(X_{t})+\E\int_{0}^{t}r\|\eta_{r}\|_{H}^{2}dr\le C\left(\E\|x_{0}\|_{H}^{2}+1\right).
\end{align*}

\end{proof}

\section{Mean curvature flow with (periodic) homogeneous normal noise \label{sec:mc-periodic}}

In this section we consider the SPDE
\begin{align}
dX_{t} & =\frac{\partial_{x}^{2}X_{t}}{1+(\partial_{x}X_{t})^{2}}dt+\a\sqrt{1+(\partial_{x}X_{t})^{2}}\circ d\b_{t}\label{eq:mc-normal}\\
X_{0} & =x_{0},\nonumber 
\end{align}
with periodic boundary conditions on $\mcO=(0,1)$ (i.e. $d=1$), $\b$ being a standard real-valued Brownian motion and $\a\le\sqrt{2}$. Informally rewriting the Stratonovich formulation of \eqref{eq:mc-normal} in Itô form as in \cite{ESR12} leads to the SPDE 
\begin{align}
dX_{t} & =\frac{\a^{2}}{2}\partial_{x}^{2}X_{t}dt+(1-\frac{\a^{2}}{2})\frac{\partial_{x}^{2}X_{t}}{1+(\partial_{x}X_{t})^{2}}dt+\a\sqrt{1+(\partial_{x}X_{t})^{2}}d\b_{t}\label{eq:mc-normal-ito}
\end{align}
with periodic boundary conditions on $\mcO=(0,1)$.  Let 
\begin{align*}
\psi(r) & =(1-\frac{\a^{2}}{2})\left(r\arctan(r)-\frac{1}{2}\log(r^{2}+1)\right)\\
\phi(r) & =\dot{\psi}(r)=(1-\frac{\a^{2}}{2})\arctan(r).
\end{align*}
For $v\in L^{2}(0,1)$ we define $v^{\bot}(x):=v(1-x)$. We then set 
\[
\vp(v):=\begin{cases}
\int_{\mcO}\psi(Dv)dx+\frac{1}{2}\int_{\partial\mcO}|[v-v^{\bot}]|H^{d-1}(dx) & \text{if }v\in(L^{2}\cap BV)(\mcO)\\
+\infty & \text{if }v\in(L^{2}\setminus BV)(\mcO),
\end{cases}
\]
where $\int_{\mcO}\psi(Dv)dx$ is defined as in Section \ref{sec:mc-pinned}. Since $\mcO=(0,1)$ we have
\[
\vp(v):=\int_{(0,1)}\psi(Dv)dx+|v(1)-v(0)|\quad\text{for }v\in(L^{2}\cap BV)(\mcO).
\]
In the following, for $p\ge1$ let 
\begin{align*}
W_{per}^{1,p}(\mcO) & :=\{f\in W^{1,p}(\mcO)|f(0)=f(1)\}\\
W_{per}^{2,p}(\mcO) & :=\{f\in(W^{2,p}\cap W_{per}^{1,p})(\mcO)|\partial_{x}f\in W_{per}^{1,p}\}.
\end{align*}
Moreover, let $H_{per}^{1}=W_{per}^{1,2}$, $H_{per}^{2}=W_{per}^{2,2}$, $H=L^{2}(\mcO)$. Then $\vp$ is the lower-semicontinuous envelope on $L^{2}$ of $\vp$ restricted to $W_{per}^{1,1}$ (cf. Appendix \ref{sec:periodic_relaxation}), i.e. of 
\[
\vp_{|W_{per}^{1,1}}(v)=\int_{\mcO}\psi(\partial_{x}v)dx,\quad v\in W_{per}^{1,1}\cap L^{2}.
\]
It is easy to see that $\vp_{|H_{per}^{1}}$ is Gateaux differentiable with
\[
D\vp_{|H_{per}^{1}}(v)(h)=\int_{\mcO}\phi(\partial_{x}v)\partial_{x}hdx.
\]
Since $\vp_{|W_{per}^{1,1}}$ is continuous on $W_{per}^{1,1}$ it is easy to see that $\vp$ is the lower-semicontinuous hull of $\vp_{|H_{per}^{1}}$ on $L^{2}$. This implies
\[
\partial\vp(u)=-\partial_{x}\phi(\partial_{x}u)=-(1-\frac{\a^{2}}{2})\frac{\partial_{x}^{2}u}{1+(\partial_{x}u)^{2}},\quad\text{for }u\in H_{per}^{2}.
\]
For $v\in H^{1}$ we define 
\[
B(v):=\a\sqrt{1+(\partial_{x}v)^{2}}.
\]
Hence, \eqref{eq:mc-normal} may be rewritten in the form
\begin{align}
dX_{t} & \in\frac{\a^{2}}{2}\partial_{x}^{2}X_{t}dt-\partial\vp(X_{t})dt+B(X_{t})d\b_{t},\label{eq:MC-pinned-2-1}\\
X_{0} & =x_{0}.\nonumber 
\end{align}
Due to the irregularity of the diffusion coefficients $B$ it does not seem possible to establish the existence of (generalized) strong solution as considered in Section \ref{sec:mc-pinned}. Instead, we introduce a notion of stochastic variational inequalities for \eqref{eq:MC-pinned-2-1}.

For regular initial data $x_{0}\in H_{per}^{1}$, the existence and uniqueness of variational solutions to \eqref{eq:mc-normal} has been shown in \cite{ESR12} (cf. also \cite{GT11} for multivalued generalizations). For general initial conditions $x_{0}\in L^{2}$ solutions have been constructed in \cite{ESR12} in a limiting sense. We now define what we mean by a solution to \eqref{eq:mc-normal}: 
\begin{defn}
\label{def:varn_periodic}Let $x_{0}\in L^{2}(\Omega;H).$ An $\mcF_{t}$-adapted process $X\in C([0,T];L^{2}(\Omega;H))$ is said to be an SVI solution to \eqref{eq:mc-normal} if there is an $\eta\in L^{2}([\tau,T]\times\Omega;H)$, $\forall\tau>0$ such that
\begin{enumerate}
\item {[}Regularity{]} 
\begin{align*}
\vp(X) & \in L^{1}([0,T]\times\Omega).
\end{align*}

\item {[}Subgradient property{]}
\[
\eta\in-\partial\vp(X),\quad dt\otimes d\P-a.e..
\]

\item {[}Stochastic variational inequality{]} For each $\mcF_{t}$-progressively measurable process $G\in L^{2}([0,T]\times\O;H)$ and each $\mcF_{t}$-adapted $H$-valued process $Z$ with $\P$-a.s. continuous sample paths such that $Z\in L^{2}([0,T]\times\O;H_{per}^{2})$ and solving the equation
\[
Z_{t}-Z_{0}=\int_{0}^{t}G_{s}ds+\int_{0}^{t}Z_{s}dW_{s},\quad\forall t\in[0,T]
\]
we have
\begin{align}
\E\|X_{t}-Z_{t}\|_{H}^{2} & \le\E\|X_{\tau}-Z_{\tau}\|_{H}^{2}+2\int_{\tau}^{t}(\eta_{r}-G_{r},X_{r}-Z_{r})_{2}dr\label{eq:var_ineq}\\
 & +\a^{2}\E\int_{\tau}^{t}(\partial_{x}^{2}Z_{r},X_{r}-Z_{r})_{2}dr,\quad\forall\tau>0.\nonumber 
\end{align}

\end{enumerate}
\end{defn}
\begin{rem}
\label{rmk:varn_sol}If $(X,\eta)$ is a generalized strong solution (defined analogously to Definition \ref{def:strong_soln})  to \eqref{eq:mc-normal} satisfying $\vp(X)\in L^{1}([0,T]\times\Omega)$ then $(X,\eta)$ is an SVI solution to \eqref{eq:mc-normal}.\end{rem}
\begin{proof}
Definition \ref{def:varn_periodic} (i),(ii) are satisfied by assumption. For (iii): Let $Z\in L^{2}([0,T]\times\O;H_{per}^{2})$ be a solution to 
\begin{align*}
dZ_{t} & =G_{t}dt+\a\sqrt{1+(\partial_{x}Z_{t})^{2}}d\b_{t}\\
 & =G_{t}dt+B(Z_{t})d\b_{t}
\end{align*}
for some $G\in L^{2}([0,T]\times\O;H)$. Then Itô's formula implies
\begin{align*}
\E\|X_{t}-Z_{t}\|_{H}^{2}= & \E\|X_{\tau}-Z_{\tau}\|_{H}^{2}+\a^{2}\E\int_{\tau}^{t}(\partial_{x}^{2}X_{r},X_{r}-Z_{r})_{2}dr\\
 & +2\E\int_{\tau}^{t}(\eta_{r}-G_{r},X_{r}-Z_{r})_{2}dr\\
 & +\a^{2}\E\int_{\tau}^{t}\|\sqrt{1+(\partial_{x}X_{r})^{2}}-\sqrt{1+(\partial_{x}Z_{r})^{2}}\|_{2}^{2}dr\quad\forall\tau>0.
\end{align*}
We note that
\begin{align*}
 & \a^{2}\|\sqrt{1+(\partial_{x}X_{r})^{2}}-\sqrt{1+(\partial_{x}Z_{r})^{2}}\|_{2}^{2}\\
 & \le\a^{2}\|\partial_{x}X_{r}-\partial_{x}Z_{r}\|_{2}^{2}\\
 & =-\a^{2}(\partial_{x}^{2}X_{r},X_{r}-Z_{r})_{2}+\a^{2}(\partial_{x}^{2}Z_{r},X_{r}-Z_{r})_{2},\quad dt\otimes d\P-\text{a.e.}
\end{align*}
and thus
\begin{align*}
\E\|X_{t}-Z_{t}\|_{H}^{2}\le & \E\|X_{\tau}-Z_{\tau}\|_{H}^{2}+2\E\int_{\tau}^{t}(\eta_{r}-G_{r},X_{r}-Z_{r})_{2}dr\\
 & +\a^{2}\E\int_{\tau}^{t}(\partial_{x}^{2}Z_{r},X_{r}-Z_{r})_{2}dr.
\end{align*}
In conclusion, each strong solution to \eqref{eq:mc-normal} is an SVI solution to \eqref{eq:mc-normal}. 
\end{proof}
The main result of the current section is the proof of well-posedness of \eqref{eq:mc-normal} in the sense of Definition \ref{def:varn_periodic}:
\begin{thm}
\label{thm:periodic_main}Let $x_{0}\in L^{2}(\Omega;H).$ Then there is a unique SVI solution $(X,\eta)$ to \eqref{eq:mc-normal} in the sense of Definition \ref{def:varn_periodic} satisfying
\begin{align*}
 & \E t\vp(X_{t})+\E\int_{0}^{t}r\|\eta_{r}\|_{H}^{2}dr\le C\left(\E\|x_{0}\|_{H}^{2}+1\right).
\end{align*}
In addition, if $\E\vp(x_{0})<\infty$ then
\begin{align*}
\E\vp(X_{t})+\E\int_{0}^{t}\|\eta_{r}\|_{H}^{2}dr & \le\E\vp(x_{0})+C.
\end{align*}
In particular, $\eta\in L^{2}([0,T]\times\Omega;H)$ and we may take $\tau=0$ in \eqref{eq:var_ineq}. 

For two SVI solutions $(X,\eta)$, $(Y,\z)$ with initial conditions $x_{0},y_{0}\in L^{2}(\Omega;H)$ we have
\[
\E\|X_{t}-Y_{t}\|_{H}^{2}\le\E\|x_{0}-y_{0}\|_{H}^{2},\quad\forall t\ge0.
\]

\end{thm}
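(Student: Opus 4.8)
\textbf{Proof proposal for Theorem \ref{thm:periodic_main}.}

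The plan is to obtain the SVI solution as a limit of generalized strong solutions to a regularized problem and to establish uniqueness directly from the stochastic variational inequality \eqref{eq:var_ineq}. For existence, I would introduce the nondegenerate approximation $dX_t^\ve = \frac{\a^2}{2}\partial_x^2 X_t^\ve\,dt - \partial\vp^\ve(X_t^\ve)\,dt + B(X_t^\ve)\,d\b_t$ (or rather a version in which the noise coefficient $B$ is also regularized so that the theory of Section~\ref{sec:mc-pinned} applies), where $\vp^\ve$ is the viscous functional as in Section~\ref{sec:mc-pinned}. The key point is that the operator $v\mapsto \frac{\a^2}{2}\partial_x^2 v - \partial\vp(v)$ still has the monotonicity and coercivity structure needed to run the arguments of Lemmas \ref{lem:H1-bound}--\ref{lem:strong_approx}: in particular the radial-symmetry estimate $\vp(J^\l v)\le\vp(v)$ and the bound $(v,\partial_x^2 v)_{H^1_{per}}\le 0$ survive in the periodic setting, and the extra linear term $\frac{\a^2}{2}\partial_x^2$ only produces a favourable sign plus an $H^1$-term controlled by the a~priori bounds. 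This yields, for $x_0\in L^2(\O;H^1_{per})$, uniform estimates $\E t\vp^\ve(X^\ve_t)+\E\int_0^t r\|\ve\D X^\ve_r - \partial\vp^\ve(X^\ve_r)\|_H^2\,dr\le C(\E\|x_0\|_H^2+1)$, and then a time-regularizing argument plus approximation of general $x_0\in L^2(\O;H)$ by $J^{1/n}x_0$ exactly as in Theorem \ref{thm:mc-pinned} gives a process $X$ and a selection $\eta\in-\partial\vp(X)$ with $\eta\in L^2([\tau,T]\times\O;H)$ for all $\tau>0$ and the claimed energy bounds; when $\E\vp(x_0)<\infty$ one gets $\eta\in L^2([0,T]\times\O;H)$ and $\tau=0$. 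That $X$ satisfies the variational inequality \eqref{eq:var_ineq} against test processes $Z$ should follow from It\^o's formula applied to $\|X^\ve-Z\|_H^2$, using the subgradient inequality $(-\partial\vp^\ve(X^\ve),X^\ve-Z)_2\le \vp^\ve(Z)-\vp^\ve(X^\ve)$ together with the same cross-term manipulation $\a^2\|\sqrt{1+(\partial_x X)^2}-\sqrt{1+(\partial_x Z)^2}\|_2^2\le -\a^2(\partial_x^2 X,X-Z)_2+\a^2(\partial_x^2 Z,X-Z)_2$ used in Remark \ref{rmk:varn_sol}, and then passing to the limit $\ve\to0$ (the drift terms via weak convergence of $\eta^\ve$ and lower semicontinuity of $\vp$, the noise terms since $B$ is now being treated as a genuine It\^o coefficient of $Z$ only).

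For uniqueness, let $(X,\eta)$ and $(Y,\z)$ be two SVI solutions with initial data $x_0,y_0$. The natural idea is to use $Y$ (approximated so as to be an admissible test process) inside the inequality \eqref{eq:var_ineq} for $X$, and symmetrically $X$ inside the inequality for $Y$, then add. The obstruction is that $X,Y$ are not in $L^2([0,T]\times\O;H^2_{per})$ and are not It\^o processes with diffusion coefficient equal to themselves, so they are not directly admissible test processes $Z$. To get around this I would regularize: set $Y^\l := $ the solution of the linear-noise It\^o equation $dY^\l_t = G^\l_t\,dt + B(Y^\l_t)\,d\b_t$ obtained by a suitable smoothing of $Y$ (e.g. via the resolvent $J^\l$ of $-\partial_x^2$ applied in space together with a time mollification, choosing $G^\l$ to be whatever drift makes this consistent), arranged so that $Y^\l\in L^2([0,T]\times\O;H^2_{per})$, $Y^\l\to Y$ in $L^2([0,T]\times\O;H)$ and $\partial_x^2 Y^\l$ stays controlled in a way that lets the cross term $\a^2\E\int(\partial_x^2 Y^\l, X-Y^\l)_2$ pass to the limit to $-\a^2\E\int\|\sqrt{1+(\partial_x X)^2}-\sqrt{1+(\partial_x Y)^2}\|_2^2\,dr$ plus $\a^2\E\int(\partial_x^2 X,X-Y)_2$; combined with $\E\int(\eta - G^\l, X-Y^\l)_2\to\E\int(\eta-\z,X-Y)_2$ and, from $Y$'s own inequality, the symmetric bound, the subgradient monotonicity $(\eta-\z,X-Y)_2\le 0$ together with the matching curvature cross-terms cancelling gives $\E\|X_t-Y_t\|_H^2\le\E\|X_\tau-Y_\tau\|_H^2$ for all $\tau>0$, and then $\tau\downarrow 0$ using $X,Y\in C([0,T];L^2(\O;H))$ and $X_0=x_0$, $Y_0=y_0$ yields the contraction $\E\|X_t-Y_t\|_H^2\le\E\|x_0-y_0\|_H^2$.

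The main obstacle, as usual for SVI formulations, is the uniqueness step — specifically constructing admissible test processes approximating a given SVI solution. One must produce, from $Y$ alone, a sequence $Y^\l$ lying in $L^2([0,T]\times\O;H^2_{per})$, solving an equation of exactly the prescribed form $dZ = G\,dt + Z\,dW$ (here $dZ = G\,dt + B(Z)\,d\b$), and converging to $Y$ strongly enough in $H$ while keeping the quadratic curvature terms $\a^2(\partial_x^2 Y^\l, X-Y^\l)_2$ and $\a^2\|\sqrt{1+(\partial_x Y^\l)^2}-\dots\|_2^2$ under control in the limit; this is where the specific structure of $B(v)=\a\sqrt{1+(\partial_x v)^2}$ and the identity relating $\|\partial_x u-\partial_x v\|_2^2$ to $(\partial_x^2 u, u-v)_2$ must be exploited, and where the fact that we have $\eta\in L^2([\tau,T]\times\O;H)$ only away from $0$ forces all estimates to be localized in time and the limit $\tau\downarrow0$ to be taken at the very end. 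A secondary technical point is justifying the It\^o formula and the passage $\ve\to0$, $\l\to0$ in the viscous approximation for the existence part, but that parallels the arguments already carried out in Section~\ref{sec:mc-pinned} and the transformation arguments in \cite{ESR12,BR13}.
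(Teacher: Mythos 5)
Your existence argument follows the paper's route essentially verbatim: viscous approximation, the a priori bounds of Lemmas \ref{lem:H1-bound-1}--\ref{lem:strong_approx-1}, weak convergence of $\ve\partial_{x}^{2}X^{\ve,n}+\partial_{x}\phi(\partial_{x}X^{\ve,n})$ to a selection $\eta\in-\partial\vp(X)$ identified through the integrated subgradient inequality, and verification of \eqref{eq:var_ineq} by the computation of Remark \ref{rmk:varn_sol} plus a Young-inequality absorption of the extra term $2\ve\E\int_{\tau}^{t}(\partial_{x}^{2}X_{r}^{\ve,n},X_{r}^{\ve,n}-Z_{r})_{2}dr$. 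That part is fine and matches the paper.

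The gap is in uniqueness. You correctly name the obstruction --- an arbitrary SVI solution $Y$ is not an admissible test process --- but your proposed remedy, mollifying $Y$ itself into a process $Y^{\l}$ solving $dY^{\l}=G^{\l}dt+B(Y^{\l})d\b$ with $Y^{\l}\in L^{2}([0,T]\times\O;H_{per}^{2})$, is not viable: an SVI solution carries no semimartingale decomposition, so there is no candidate for $G^{\l}$ (``whatever drift makes this consistent'' is not defined), and even if $Y$ were a semimartingale, smoothing by $J^{\l}$ would turn a martingale part $\int B(Y)d\b$ into $\int J^{\l}B(Y)d\b$, which is not of the required form $\int B(Y^{\l})d\b$. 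The paper avoids this entirely: uniqueness is proved by testing the variational inequality of the arbitrary SVI solution $X$ against the viscous strong solutions $Y^{\ve,n}$ already produced in the existence step (admissible by Lemma \ref{lem:H1-bound-1}), with $G=\ve\partial_{x}^{2}Y^{\ve,n}+\frac{\a^{2}}{2}\partial_{x}^{2}Y^{\ve,n}+\partial_{x}\phi(\partial_{x}Y^{\ve,n})$; the term $\frac{\a^{2}}{2}\partial_{x}^{2}Y^{\ve,n}$ in $G$ exactly cancels the curvature term $\a^{2}\E\int_{\tau}^{t}(\partial_{x}^{2}Y_{r}^{\ve,n},X_{r}-Y_{r}^{\ve,n})_{2}dr$, monotonicity of $\partial\vp$ kills the remaining drift pairing, the $O(\ve)$ contribution is absorbed by Young's inequality, and then $\ve\to0$, $n\to\infty$, $\tau\downarrow0$ give $\E\|X_{t}-Y_{t}\|_{H}^{2}\le\E\|x_{0}-y_{0}\|_{H}^{2}$ where $Y$ is the \emph{constructed} solution. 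Taking $y_{0}=x_{0}$ identifies $X$ with the constructed solution, and the contraction estimate for two arbitrary SVI solutions follows. No symmetrization (testing $X$ in $Y$'s inequality and adding) is needed, and no regularization of a general SVI solution is ever performed. As written, your uniqueness argument is incomplete at exactly the point you flag as the main obstacle.
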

For notational convenience we introduce the following semi-norm on $H_{per}^{1}$
\[
\|v\|_{H_{per,0}^{1}}:=\|\partial_{x}v\|_{2}.
\]
We note
\begin{align}
\|B(v)\|_{L_{2}(\R;H)}^{2} & =\a^{2}\left\Vert \sqrt{1+(\partial_{x}v)^{2}}\right\Vert _{H}^{2}=\a^{2}\int_{\mcO}1+(\partial_{x}v)^{2}dx,\quad\forall v\in H^{1}.\label{eq:periodic-B-growth}
\end{align}
and
\begin{align}
\|B(v)\|_{L_{2}(\R;H_{per,0}^{1})}^{2} & =\a^{2}\left\Vert \partial_{x}\sqrt{1+(\partial_{x}v)^{2}}\right\Vert _{H}^{2}\nonumber \\
 & =\a^{2}\int_{\mcO}\frac{(\partial_{x}v)^{2}(\partial_{x}^{2}v)^{2}}{1+(\partial_{x}v)^{2}}dx\label{eq:periodic-B-growth-strong}\\
 & \le\a^{2}\int_{\mcO}(\partial_{x}^{2}v)^{2}dx,\quad\forall v\in H^{2}.\nonumber 
\end{align}
Moreover,
\begin{align}
\|B(v)-B(w)\|_{L_{2}(\R;H)}^{2} & =\a^{2}\left\Vert \sqrt{1+(\partial_{x}v)^{2}}-\sqrt{1+(\partial_{x}w)^{2}}\right\Vert _{H}^{2}\label{eq:periodic-B-Lip}\\
 & \le\a^{2}\int_{\mcO}(\partial_{x}v-\partial_{x}w)^{2}dx,\quad\forall v,w\in H^{1}.\nonumber 
\end{align}
Some parts of the proof of Theorem \ref{thm:periodic_main} are analogous to the proof of Theorem \ref{thm:mc-pinned}. In this case we will restrict to short comments on the required modifications. The proof proceeds by considering vanishing viscosity approximations and regularizations in the initial condition. We shall first consider the case $x_{0}\in L^{2}(\O;H_{per}^{1})$ and
\begin{align}
dX_{t}^{\ve} & =\ve\partial_{x}^{2}X_{t}^{\ve}dt+(1-\frac{\a^{2}}{2})\frac{\partial_{x}^{2}X_{t}^{\ve}}{1+(\partial_{x}X_{t}^{\ve})^{2}}dt+\a\sqrt{1+(\partial_{x}X_{t}^{\ve})^{2}}\circ d\b_{t}\label{eq:mc-normal-visc-1}\\
X_{0}^{\ve} & =x_{0},\nonumber 
\end{align}
for $\ve\ge0$. Correspondingly, we set
\[
\vp^{\ve}(v)=\frac{\ve}{2}\int|\partial_{x}v|^{2}dx+\int\psi(\partial_{x}v)dx,\quad\text{for }v\in H_{per}^{1}.
\]
The variational formulation of \eqref{eq:mc-normal-visc-1} is based on the Gelfand triple 
\[
H_{per}^{1}\hookrightarrow L^{2}\hookrightarrow(H_{per}^{1})^{*}
\]
 and the variational operator
\[
_{(H_{per}^{1})^{*}}\<A^{\ve}(v),w\>_{H_{per}^{1}}:=-\ve\int_{\mcO}\partial_{x}v\partial_{x}wdx-\int_{\mcO}\phi(\partial_{x}v)\partial_{x}wdx,\quad\text{for }v,w\in H_{per}^{1}.
\]
By \cite{PR07} there is a unique solution to \eqref{eq:mc-normal-visc-1} in the sense of a variational solution $X^{\ve}\in L^{2}(\Omega;C([0,T];H))\cap L^{2}([0,T]\times\Omega;H_{per}^{1})$ to
\begin{align}
dX_{t}^{\ve}= & \ve\partial_{x}^{2}X_{t}^{\ve}dt+\frac{\a^{2}}{2}\partial_{x}^{2}X_{t}^{\ve}dt+(1-\frac{\a^{2}}{2})\frac{\partial_{x}^{2}X_{t}^{\ve}}{1+(\partial_{x}X_{t}^{\ve})^{2}}dt\label{eq:mc-normal-ito-visc}\\
 & +\a\sqrt{1+(\partial_{x}X_{t}^{\ve})^{2}}d\b_{t}.\nonumber 
\end{align}

\begin{lem}
\label{lem:H1-bound-1}For each $\ve>0$ we have $X^{\ve}\in L^{2}([0,T]\times\Omega;H_{per}^{2})$ and
\[
\E\sup_{t\in[0,T]}e^{-Ct}\|X_{t}^{\ve}\|_{H_{per}^{1}}^{2}+2\ve\int_{0}^{t}\E e^{-Cr}\|\partial_{x}^{2}X_{r}^{\ve}\|_{2}^{2}dr\le C\left(\E\|x_{0}\|_{H_{per}^{1}}^{2}+1\right),
\]
for some constant $C$ independent of $\ve>0.$ \end{lem}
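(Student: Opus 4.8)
The plan is to mimic the proof of Lemma \ref{lem:H1-bound}, working with a Galerkin approximation for the variational solution $X^{\ve}$ constructed via \cite{PR07} and applying It\^o's formula to $\|X_{t}^{n}\|_{H_{per}^{1}}^{2}$, i.e. essentially to $\|\partial_{x}X_{t}^{n}\|_{2}^{2}$ plus the $L^{2}$-term. Concretely, I would take $(e_{i})$ to be an orthonormal basis of $L^{2}$ consisting of (periodic) eigenvectors of $-\partial_{x}^{2}$ and let $P^{n}$ denote the orthogonal projection onto their span; the finite-dimensional SDEs for $X^{n}$ have global solutions because the coefficients are locally Lipschitz with linear growth in $H_{per}^{1}$ (using \eqref{eq:periodic-B-growth-strong} and the Lipschitz bounds on $\phi$). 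Applying It\^o in $H_{per}^{1}$ yields
\begin{align*}
\|X_{t}^{n}\|_{H_{per}^{1}}^{2} & =\|P^{n}x_{0}\|_{H_{per}^{1}}^{2}+2\int_{0}^{t}(X_{r}^{n},\ve\partial_{x}^{2}X_{r}^{n}+\tfrac{\a^{2}}{2}\partial_{x}^{2}X_{r}^{n}+(1-\tfrac{\a^{2}}{2})\partial_{x}\phi(\partial_{x}X_{r}^{n})/(1-\tfrac{\a^2}{2}))_{H_{per}^{1}}dr\\
 & \quad+2\int_{0}^{t}(X_{r}^{n},P^{n}B(X_{r}^{n})d\b_{r})_{H_{per}^{1}}+\int_{0}^{t}\|P^{n}B(X_{r}^{n})\|_{L_{2}(\R;H_{per}^{1})}^{2}dr,
\end{align*}
where one must be slightly careful that the $\partial_x^2$-terms combine to $(\ve+\tfrac{\a^2}{2})$ times $-\|\partial_x^2 X^n_r\|_2^2$ modulo projection errors that vanish in the limit.

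The three drift contributions are then handled separately. The viscous term $\ve\partial_{x}^{2}X_{r}^{n}$ pairs with $X_{r}^{n}$ in $H_{per}^{1}$ to give $-\ve\|\partial_{x}^{2}X_{r}^{n}\|_{2}^{2}$; similarly the It\^o-correction term $\tfrac{\a^{2}}{2}\partial_{x}^{2}X_{r}^{n}$ contributes $-\tfrac{\a^{2}}{2}\|\partial_{x}^{2}X_{r}^{n}\|_{2}^{2}\le0$, which is crucial and uses $\a\le\sqrt{2}$ only insofar as the sign is favourable (it is favourable for all $\a$). For the nonlinear term $(1-\tfrac{\a^{2}}{2})\partial_{x}^{2}X_{r}^{n}/(1+(\partial_{x}X_{r}^{n})^{2})$ I would argue exactly as in the proof of Lemma \ref{lem:H1-bound}: since $\partial\vp_{|W_{per}^{1,1}}$-type functional $v\mapsto\int\psi(\partial_x v)$ decreases under the resolvent $J^{1/n}=(1-\tfrac1n\partial_x^2)^{-1}$ (using that $\psi$ is radial/even, via \cite[Proposition 8.2]{BR13}), one obtains
\[
(X_{r}^{n},\partial_{x}\phi(\partial_{x}X_{r}^{n}))_{H_{per}^{1}}=(-\partial_{x}^{2}X_{r}^{n},\partial_{x}\phi(\partial_{x}X_{r}^{n}))_{2}\le0,
\]
so this term drops out of the estimate with the right sign. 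The noise term is controlled by \eqref{eq:periodic-B-growth-strong}, which gives $\|P^{n}B(v)\|_{L_{2}(\R;H_{per}^{1})}^{2}\le\|B(v)\|^2_{L_2(\R;H^1_{per})}\le\a^{2}\|\partial_{x}^{2}v\|_{2}^{2}$ for $v\in H^2_{per}$ — but note this competes with the $-(\ve+\tfrac{\a^2}{2})\|\partial_x^2 X^n_r\|_2^2$ drift, and it is precisely here that $\a\le\sqrt2$ matters: we need $\a^2 \le \ve + \tfrac{\a^2}{2}$ to be beaten, i.e. $\tfrac{\a^2}{2}\le\ve$, which is \emph{not} uniformly true. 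One must instead use the sharper bound $\|B(v)\|^2_{L_2(\R;H^1_{per})}=\a^2\int \frac{(\partial_x v)^2(\partial_x^2 v)^2}{1+(\partial_x v)^2}dx \le \a^2\|\partial_x^2 v\|_2^2$ together with the fact that it is dominated on the region where $(\partial_x v)^2$ is bounded, so the problematic part is where $|\partial_x v|$ is large — and there additional structure (or simply absorbing $\tfrac{\a^2}{2}\|\partial_x^2 X^n\|_2^2$ against the viscous term plus a Gronwall argument on the lower-order $\|X^n\|_{H^1_{per}}^2$ mass with constant $C$ depending on $\a$ but not $\ve$) closes the estimate, yielding the stated inequality with the factor $2\ve$ (the viscous term surviving after the $\tfrac{\a^2}{2}$ drift cancels the $\a^2$ noise term up to the $\tfrac{\a^2}{2}$ that is Gronwall'd away).

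Having the uniform (in $n$) bound
\[
\E\sup_{t\in[0,T]}e^{-Ct}\|X_{t}^{n}\|_{H_{per}^{1}}^{2}+2\ve\int_{0}^{t}\E e^{-Cr}\|\partial_{x}^{2}X_{r}^{n}\|_{2}^{2}dr\le C\left(\E\|x_{0}\|_{H_{per}^{1}}^{2}+1\right)
\]
— obtained after applying Burkholder--Davis--Gundy to the stochastic integral and choosing $C$ large — I would extract weakly convergent subsequences $X^{n}\rightharpoonup X$ in $L^{2}([0,T]\times\O;H_{per}^{2})$ and weak-$*$ in $L^{2}(\O;L^{\infty}([0,T];H_{per}^{1}))$, identify the limit with the variational solution $X^{\ve}$ (by uniqueness from \cite{PR07}), and pass to the limit in the inequality using weak lower semicontinuity of the norms, which gives the claim. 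I expect the main obstacle to be the bookkeeping of constants so that the viscous coefficient appearing on the left is genuinely $2\ve$ (i.e.\ a definite fraction of $\ve$ survives after subtracting the $\a^{2}$ from the noise bound) while the remaining $\tfrac{\a^2}{2}$-worth of $\|\partial_x^2 X^n\|_2^2$ is cancelled by the It\^o-correction drift $-\tfrac{\a^2}{2}\|\partial_x^2 X^n\|^2_2$ — i.e.\ carefully exploiting that $\ve\partial_x^2 + \tfrac{\a^2}{2}\partial_x^2$ together provide $-(\ve+\tfrac{\a^2}{2})\|\partial_x^2 X^n\|_2^2$ against a noise term bounded by $\a^2\|\partial_x^2 X^n\|_2^2$, leaving $2\ve\|\partial_x^2 X^n\|_2^2$ after absorbing a Gronwall-type remainder of size $\a^2 - \tfrac{\a^2}{2} - \ve \le \tfrac{\a^2}{2}$ into the lower-order mass, which needs $\a\le\sqrt2$ to keep all signs consistent; the handling of the projection $P^n$ in the nonlinear monotone term (arguing that the Yosida/resolvent monotonicity survives the Galerkin truncation in the limit) is the other delicate point, handled exactly as in Lemma \ref{lem:H1-bound}.
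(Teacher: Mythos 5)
Your overall strategy (Galerkin approximation in a periodic eigenbasis of $-\partial_x^2$, It\^o's formula for the $H_{per}^{1}$-seminorm, Burkholder--Davis--Gundy, extraction of weak limits and lower semicontinuity) is exactly the paper's, and your treatment of the viscous term, the nonlinear term and the projections is fine. The genuine gap is in the one step that this lemma is really about: the balance between the It\^o-correction drift $\tfrac{\a^{2}}{2}\partial_{x}^{2}X$ and the gradient noise. You compare the noise contribution $\|B(v)\|_{L_{2}(\R;H_{per,0}^{1})}^{2}\le\a^{2}\|\partial_{x}^{2}v\|_{2}^{2}$ against a drift contribution of size $(\ve+\tfrac{\a^{2}}{2})\|\partial_{x}^{2}v\|_{2}^{2}$, conclude that a deficit of order $\tfrac{\a^{2}}{2}\|\partial_{x}^{2}v\|_{2}^{2}$ remains, and propose to absorb it by a Gronwall argument on the lower-order mass. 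This deficit is an artifact of dropping the factor $2$ that It\^o's formula places in front of the drift pairing but not in front of the quadratic-variation term: with the correct bookkeeping the drift contributes $-2\ve\|\partial_{x}^{2}v\|_{2}^{2}-\a^{2}\|\partial_{x}^{2}v\|_{2}^{2}-(2-\a^{2})\int_{\mcO}\frac{(\partial_{x}^{2}v)^{2}}{1+(\partial_{x}v)^{2}}dx$, so the crude bound $\a^{2}\|\partial_{x}^{2}v\|_{2}^{2}$ on the noise is cancelled \emph{exactly} by the It\^o-correction part, leaving $-2\ve\|\partial_{x}^{2}v\|_{2}^{2}$ plus nonpositive terms (nonpositive precisely because $\a\le\sqrt{2}$). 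No absorption of a second-order quantity is needed.

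Moreover, the fix you propose for this phantom deficit would not work even if the deficit were real: a term $\tfrac{\a^{2}}{2}\|\partial_{x}^{2}X^{n}\|_{2}^{2}$ is top-order and cannot be controlled by Gronwall on $\|X^{n}\|_{H_{per}^{1}}^{2}$, nor absorbed into $\ve\|\partial_{x}^{2}X^{n}\|_{2}^{2}$ uniformly in $\ve$; your ``split into regions where $\partial_{x}v$ is large or bounded'' remark is not an argument. So as written the proof does not close. The paper organizes the exact cancellation through the pointwise identity $\tfrac{\a^{2}}{2}\partial_{x}^{2}v+(1-\tfrac{\a^{2}}{2})\frac{\partial_{x}^{2}v}{1+(\partial_{x}v)^{2}}=\tfrac{\a^{2}}{2}\frac{(\partial_{x}v)^{2}\partial_{x}^{2}v}{1+(\partial_{x}v)^{2}}+\frac{\partial_{x}^{2}v}{1+(\partial_{x}v)^{2}}$; the first summand, paired with $-\partial_{x}^{2}v$ and doubled, gives $-\a^{2}\int_{\mcO}\frac{(\partial_{x}v)^{2}(\partial_{x}^{2}v)^{2}}{1+(\partial_{x}v)^{2}}dx$, which is exactly $-\|B(v)\|_{L_{2}(\R;H_{per,0}^{1})}^{2}$ by \eqref{eq:periodic-B-growth-strong}. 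Replace your absorption argument by this identity (or by the factor-$2$-correct accounting above) and the rest of your proposal goes through; incidentally, for the nonlinear term the direct computation $(v,\partial_{x}\phi(\partial_{x}v))_{H_{per,0}^{1}}=-(1-\tfrac{\a^{2}}{2})\int_{\mcO}\frac{(\partial_{x}^{2}v)^{2}}{1+(\partial_{x}v)^{2}}dx\le0$ is available here and avoids the resolvent/Yosida detour of Lemma \ref{lem:H1-bound}.
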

\begin{proof}
As in Lemma \ref{lem:H1-bound} we may argue via Galerkin approximations $X^{n}$, where $(e_{i})_{i=1}^{\infty}$ now is an orthonormal basis of the periodic Laplacian $\partial_{x}^{2}$ on $L^{2}(\mcO)$. First note
\begin{align*}
\frac{\a^{2}}{2}\partial_{x}^{2}v+(1-\frac{\a^{2}}{2})\frac{\partial_{x}^{2}v}{1+(\partial_{x}v)^{2}} & =\frac{\a^{2}}{2}\frac{\partial_{x}^{2}v(1+(\partial_{x}v)^{2})}{1+(\partial_{x}v)^{2}}+(1-\frac{\a^{2}}{2})\frac{\partial_{x}^{2}v}{1+(\partial_{x}v)^{2}}\\
 & =\frac{\a^{2}}{2}\frac{\partial_{x}^{2}v(\partial_{x}v)^{2}}{1+(\partial_{x}v)^{2}}+\frac{\partial_{x}^{2}v}{1+(\partial_{x}v)^{2}}.
\end{align*}
Hence,
\begin{align*}
 & 2(-\partial\vp^{\ve}(v),v)_{H_{per,0}^{1}}+\|B(v)\|_{H_{per,0}^{1}}^{2}\\
= & -(2\ve\partial_{x}^{2}v+\a^{2}\partial_{x}^{2}v+(2-\a^{2})\frac{\partial_{x}^{2}v}{1+(\partial_{x}v)^{2}},\partial_{x}^{2}v)_{2}+\|\partial_{x}\a\sqrt{1+(\partial_{x}v)^{2}}\|_{2}^{2}\\
= & -(2\ve\partial_{x}^{2}v+\a^{2}\frac{\partial_{x}^{2}v(\partial_{x}v)^{2}}{1+(\partial_{x}v)^{2}}+2\frac{\partial_{x}^{2}v}{1+(\partial_{x}v)^{2}},\partial_{x}^{2}v)_{2}+\|\partial_{x}\a\sqrt{1+(\partial_{x}v)^{2}}\|_{2}^{2}\\
= & -2\ve\int_{\mcO}(\partial_{x}^{2}v)^{2}dx-2\int_{\mcO}\frac{(\partial_{x}^{2}v)^{2}}{1+(\partial_{x}v)^{2}}dx-\a^{2}\int_{\mcO}\frac{(\partial_{x}v)^{2}(\partial_{x}^{2}v)^{2}}{1+(\partial_{x}v)^{2}}dx+\a^{2}\int_{\mcO}\frac{(\partial_{x}v)^{2}(\partial_{x}^{2}v)^{2}}{1+(\partial_{x}v)^{2}}dx\\
= & -2\ve\int_{\mcO}(\partial_{x}^{2}v)^{2}dx-2\int_{\mcO}\frac{(\partial_{x}^{2}v)^{2}}{1+(\partial_{x}v)^{2}}dx\\
\le & -2\ve\int_{\mcO}(\partial_{x}^{2}v)^{2}dx,
\end{align*}
for all $v\in H_{per}^{2}$. Itô's formula thus implies
\begin{align*}
\|X_{t}^{n}\|_{H_{per,0}^{1}}^{2}= & \|P^{n}x_{0}\|_{H_{per,0}^{1}}^{2}\\
 & +2\int_{0}^{t}(\ve P^{n}\partial_{x}^{2}X_{r}^{n}+P^{n}\a^{2}\partial_{x}^{2}X_{r}^{n}+(2-\a^{2})P^{n}\frac{\partial_{x}^{2}X_{r}^{n}}{1+(\partial_{x}X_{r}^{n})^{2}},X_{r}^{n})_{H_{per,0}^{1}}dr\\
 & +2\int_{0}^{t}(P^{n}B(X_{r}^{n}),X_{r}^{n})_{H_{per,0}^{1}}dW_{r}+\int_{0}^{t}\|P^{n}B(X_{r}^{n})\|_{L_{2}(\R,H_{per,0}^{1})}^{2}dr\\
\le & \|x_{0}\|_{H_{per,0}^{1}}^{2}-2\ve\int_{0}^{t}\|\partial_{x}^{2}X_{r}^{n}\|_{2}^{2}dr+\int_{0}^{t}(B(X_{r}^{n}),X_{r}^{n})_{H_{per,0}^{1}}dW_{r}.
\end{align*}
Observing
\begin{align*}
\|X_{t}^{n}\|_{H}^{2}= & \|P^{n}x_{0}\|_{H}^{2}\\
 & +2\int_{0}^{t}(\ve P^{n}\partial_{x}^{2}X_{r}^{n}+P^{n}\a^{2}\partial_{x}^{2}X_{r}^{n}+(2-\a^{2})P^{n}\frac{\partial_{x}^{2}X_{r}^{n}}{1+(\partial_{x}X_{r}^{n})^{2}},X_{r}^{n})_{H}dr\\
 & +2\int_{0}^{t}(P^{n}B(X_{r}^{n}),X_{r}^{n})_{H}dW_{r}+\int_{0}^{t}\|P^{n}B(X_{r}^{n})\|_{L_{2}(\R;H)}^{2}dr\\
\le & \|x_{0}\|_{H}^{2}+\int_{0}^{t}(B(X_{r}^{n}),X_{r}^{n})_{H}dW_{r},
\end{align*}
the proof may be completed as in Lemma \ref{lem:H1-bound}.\end{proof}
\begin{lem}
\label{lem:H-bound-2}For each $\ve>0$ we have $\vp^{\ve}(X^{\ve})\in L^{1}([0,T]\times\Omega)$ with
\begin{align*}
\E\int_{0}^{T}e^{-Cr}\vp^{\ve}(X_{r}^{\ve})dr & \le C\left(\E\|x_{0}\|_{H}^{2}+1\right),
\end{align*}
for some constant $C$ independent of $\ve>0.$ \end{lem}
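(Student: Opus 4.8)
The plan is to follow the proof of Lemma \ref{lem:H-bound}, the only new input being a coercivity estimate for the (It\^o) drift operator of \eqref{eq:mc-normal-ito-visc},
\[
\tilde A^{\ve}(v):=\ve\partial_{x}^{2}v+\tfrac{\a^{2}}{2}\partial_{x}^{2}v+(1-\tfrac{\a^{2}}{2})\tfrac{\partial_{x}^{2}v}{1+(\partial_{x}v)^{2}},
\]
paired with the diffusion $B$. First I would compute, for $v\in H_{per}^{2}$ and integrating by parts,
\[
2(\tilde A^{\ve}(v),v)_{H}=-2\ve\|\partial_{x}v\|_{2}^{2}-\a^{2}\|\partial_{x}v\|_{2}^{2}-2\int_{\mcO}\phi(\partial_{x}v)\,\partial_{x}v\,dx,
\]
and recall from \eqref{eq:periodic-B-growth} that $\|B(v)\|_{L_{2}(\R;H)}^{2}=\a^{2}+\a^{2}\|\partial_{x}v\|_{2}^{2}$ (using $|\mcO|=1$). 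The point is that the two terms $\a^{2}\|\partial_{x}v\|_{2}^{2}$ cancel \emph{exactly} --- this is precisely the role of the It\^o--Stratonovich correction $\tfrac{\a^{2}}{2}\partial_{x}^{2}$ --- so that
\[
2(\tilde A^{\ve}(v),v)_{H}+\|B(v)\|_{L_{2}(\R;H)}^{2}=\a^{2}-2\ve\|\partial_{x}v\|_{2}^{2}-2\int_{\mcO}\phi(\partial_{x}v)\,\partial_{x}v\,dx.
\]

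Next, since $\a\le\sqrt{2}$ we have $\phi(r)r-\psi(r)=(1-\tfrac{\a^{2}}{2})\tfrac{1}{2}\log(r^{2}+1)\ge0$, hence $\int_{\mcO}\phi(\partial_{x}v)\,\partial_{x}v\,dx\ge\int_{\mcO}\psi(\partial_{x}v)\,dx$, and combining this with $\psi\ge0$ and the definition of $\vp^{\ve}$ I would obtain the coercivity estimate
\[
2(\tilde A^{\ve}(v),v)_{H}+\|B(v)\|_{L_{2}(\R;H)}^{2}\le-2\vp^{\ve}(v)+\a^{2},\qquad v\in H_{per}^{1},
\]
the extension from $H_{per}^{2}$ to $H_{per}^{1}$ being by density. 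Applying It\^o's formula (valid in the variational framework of \cite{PR07}) to $e^{-Ct}\|X_{t}^{\ve}\|_{H}^{2}$, taking expectations so that the martingale term drops out, inserting this estimate, and discarding the nonpositive term $-Ce^{-Cr}\|X_{r}^{\ve}\|_{H}^{2}$ yields
\[
\E e^{-Ct}\|X_{t}^{\ve}\|_{H}^{2}+2\E\int_{0}^{t}e^{-Cr}\vp^{\ve}(X_{r}^{\ve})\,dr\le\E\|x_{0}\|_{H}^{2}+\a^{2}T,
\]
(any $C\ge0$ works, e.g.\ $C=0$). Since $\vp^{\ve}\ge0$, this simultaneously shows $\vp^{\ve}(X^{\ve})\in L^{1}([0,T]\times\Omega)$ and, after dropping the first term and absorbing $\a^{2}T$ into the constant, gives the asserted bound with a constant independent of $\ve$.

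The computation is entirely elementary; the one point that needs care --- and the only place where $\a\le\sqrt{2}$ is used --- is the exact cancellation of the $\a^{2}\|\partial_{x}v\|_{2}^{2}$ terms together with the resulting sign conditions $\psi\ge0$ and $\phi(r)r\ge\psi(r)$, which is what closes the $L^{2}$-energy estimate in spite of the linear growth (in $\partial_{x}v$) of $B$. All remaining, routine, steps are identical to those in the proof of Lemma \ref{lem:H-bound}.
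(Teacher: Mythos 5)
Your proposal is correct and follows essentially the same route as the paper's proof: It\^o's formula for $e^{-Kt}\|X_t^{\ve}\|_H^2$, the exact cancellation of the correction term $\frac{\a^{2}}{2}\partial_x^2$ against the It\^o quadratic-variation term $\|B\|_{L_2(\R;H)}^2$, and the coercivity $\phi(\xi)\xi\ge c\,\psi(\xi)-C$ (the paper invokes the general estimate \eqref{eq:coerc_of_Aeps}, while you verify it directly for the $\arctan$ nonlinearity with $c=1$, $C=0$). There is no substantive difference.
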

\begin{proof}
By Itô's formula and \eqref{eq:coerc_of_Aeps} we have 
\begin{align*}
\E e^{-Kt}\|X_{t}^{\ve}\|_{H}^{2}= & \E\|x_{0}\|_{H}^{2}+2\E\int_{0}^{t}e^{-Kr}(\frac{\a^{2}}{2}\partial_{x}^{2}X_{r}^{\ve}+A^{\ve}(X_{r}^{\ve}),X_{r}^{\ve})_{H}dr\\
 & +2\E\int_{0}^{t}e^{-Kr}\|B(X_{r}^{\ve})\|_{L_{2}(\R,H)}^{2}dr-K\int_{0}^{t}e^{-Kr}\|X_{r}^{\ve}\|_{H}^{2}dr\\
\le & \E\|x_{0}\|_{H}^{2}-2\E\int_{0}^{t}e^{-Kr}\vp^{\ve}(X_{r}^{\ve})+Ce^{-Kr}\|X_{r}^{\ve}\|_{H}^{2}dr\\
 & -K\int_{0}^{t}e^{-Kr}\|X_{r}^{\ve}\|_{H}^{2}dr.
\end{align*}
Choosing $K$ large enough yields the claim.\end{proof}
\begin{lem}
\label{lem:strong_approx-1}Let $x_{0}\in L^{2}(\O;H_{per}^{1})$. For all $\ve>0$ we have
\begin{align}
 & \E t\vp^{\ve}(X_{t}^{\ve})+\E\int_{0}^{t}r\|\ve\partial_{x}^{2}X_{r}^{\ve}+\partial_{x}\phi(\partial_{x}X_{r}^{\ve})\|_{H}^{2}dr\le C\left(\E\|x_{0}\|_{H}^{2}+1\right)\label{eq:approx_strong_H-1}
\end{align}
and
\begin{equation}
\E\vp^{\ve}(X_{t}^{\ve})+\E\int_{0}^{t}\|\ve\partial_{x}^{2}X_{r}^{\ve}+\partial_{x}\phi(\partial_{x}X_{r}^{\ve})\|_{H}^{2}dr\le\E\vp^{\ve}(x_{0})+C,\label{approx_strong_S-5}
\end{equation}
for some constant $C>0$.\end{lem}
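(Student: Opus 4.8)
The plan is to follow the proof of Lemma~\ref{lem:strong_approx} with the obvious periodic modifications; the genuinely new point is an exact cancellation between the It\^o correction coming from the square-root noise $B$ and the additional It\^o drift $\frac{\a^{2}}{2}\partial_{x}^{2}X^{\ve}$ in \eqref{eq:mc-normal-ito-visc}. First I would record that, by Lemma~\ref{lem:H1-bound-1}, $X^{\ve}\in L^{2}([0,T]\times\Omega;H_{per}^{2})$, so the drift of \eqref{eq:mc-normal-ito-visc}, namely $\frac{\a^{2}}{2}\partial_{x}^{2}X^{\ve}+A^{\ve}(X^{\ve})$ with $A^{\ve}(v)=\ve\partial_{x}^{2}v+\partial_{x}\phi(\partial_{x}v)=(\ve+\phi'(\partial_{x}v))\partial_{x}^{2}v$, lies in $L^{2}([0,T]\times\Omega;H)$ and \eqref{eq:mc-normal-ito-visc} holds as an identity in $H$, $dt\otimes d\P$-a.e. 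Then, writing $J^{\l}:=(1-\l\partial_{x}^{2})^{-1}$ for the resolvent of the periodic Laplacian on $L^{2}$ and $\vp^{\ve,\l}:=\vp^{\ve}\circ J^{\l}$, exactly as in \eqref{eq:lip_derivatives} one checks that $\vp^{\ve,\l}\in C^{2}(H)$ with Lipschitz derivatives
\[
D\vp^{\ve,\l}(v)(h)=-(A^{\ve}(J^{\l}v),J^{\l}h)_{H},\qquad D^{2}\vp^{\ve,\l}(v)(g,g)=\ve\|\partial_{x}J^{\l}g\|_{2}^{2}+\int_{\mcO}\phi'(\partial_{x}J^{\l}v)(\partial_{x}J^{\l}g)^{2}dx ,
\]
so It\^o's formula is applicable to $t\mapsto t\vp^{\ve,\l}(X_{t}^{\ve})$ and to $t\mapsto\vp^{\ve,\l}(X_{t}^{\ve})$.

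Applying It\^o to $t\vp^{\ve,\l}(X_{t}^{\ve})$ and taking expectations (the stochastic term being a true martingale after the standard localisation, since $\vp^{\ve,\l}$ has Lipschitz derivatives and $X^{\ve}\in L^{2}(\O;C([0,T];H))$) produces the ``main'' term $-(A^{\ve}(J^{\l}X^{\ve}),J^{\l}A^{\ve}(X^{\ve}))_{H}$, the ``extra'' term $-\frac{\a^{2}}{2}(A^{\ve}(J^{\l}X^{\ve}),\partial_{x}^{2}J^{\l}X^{\ve})_{H}$, the It\^o correction $\frac{1}{2}D^{2}\vp^{\ve,\l}(X^{\ve})(B(X^{\ve}),B(X^{\ve}))$, plus the term $\vp^{\ve,\l}(X^{\ve})$ coming from the factor $t$. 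I would then let $\l\downarrow0$: using $J^{\l}\to\mathrm{Id}$ strongly on $L^{2}$, $H_{per}^{1}$ and $H_{per}^{2}$, the boundedness and continuity of $\phi'$, the embedding $H_{per}^{1}\hookrightarrow C$ (which gives $B(X^{\ve})\in H_{per}^{1}$, hence $\partial_{x}J^{\l}B(X^{\ve})\to\partial_{x}B(X^{\ve})$ in $L^{2}$), and the integrable majorants supplied by Lemmas~\ref{lem:H1-bound-1} and \ref{lem:H-bound-2}, dominated convergence gives $-(A^{\ve}(J^{\l}X^{\ve}),J^{\l}A^{\ve}(X^{\ve}))_{H}\to-\|A^{\ve}(X^{\ve})\|_{H}^{2}$, $\vp^{\ve,\l}(X^{\ve})\to\vp^{\ve}(X^{\ve})$, and the ``extra'' term together with the It\^o correction converge to $-\frac{\a^{2}}{2}(A^{\ve}(X^{\ve}),\partial_{x}^{2}X^{\ve})_{2}+\frac{1}{2}\big(\ve\|\partial_{x}B(X^{\ve})\|_{2}^{2}+\int_{\mcO}\phi'(\partial_{x}X^{\ve})(\partial_{x}B(X^{\ve}))^{2}dx\big)$.

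The crux is the algebraic identity for this last pair of terms. Inserting $A^{\ve}(v)=(\ve+\phi'(\partial_{x}v))\partial_{x}^{2}v$, $\phi'(r)=(1-\frac{\a^{2}}{2})(1+r^{2})^{-1}$ and $\partial_{x}B(v)=\a\,\partial_{x}v\,\partial_{x}^{2}v\,(1+(\partial_{x}v)^{2})^{-1/2}$, a direct computation gives
\[
-\frac{\a^{2}}{2}(A^{\ve}(v),\partial_{x}^{2}v)_{2}+\frac{1}{2}\Big(\ve\|\partial_{x}B(v)\|_{2}^{2}+\int_{\mcO}\phi'(\partial_{x}v)(\partial_{x}B(v))^{2}dx\Big)=-\frac{\ve\a^{2}}{2}\int_{\mcO}\frac{(\partial_{x}^{2}v)^{2}}{1+(\partial_{x}v)^{2}}dx-\frac{\a^{2}}{2}\Big(1-\frac{\a^{2}}{2}\Big)\int_{\mcO}\frac{(\partial_{x}^{2}v)^{2}}{(1+(\partial_{x}v)^{2})^{2}}dx\le0,
\]
the nonpositivity being exactly where the hypothesis $\a\le\sqrt{2}$ is used; this is the $\vp^{\ve}$-level analogue of the cancellation already exploited in Lemma~\ref{lem:H1-bound-1}. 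Dropping these nonpositive terms leaves
\[
\E t\vp^{\ve}(X_{t}^{\ve})+\E\int_{0}^{t}r\|A^{\ve}(X_{r}^{\ve})\|_{H}^{2}dr\le\E\int_{0}^{t}\vp^{\ve}(X_{r}^{\ve})dr\le C\left(\E\|x_{0}\|_{H}^{2}+1\right),
\]
the last bound being Lemma~\ref{lem:H-bound-2}; since $A^{\ve}(X^{\ve})=\ve\partial_{x}^{2}X^{\ve}+\partial_{x}\phi(\partial_{x}X^{\ve})$ this is \eqref{eq:approx_strong_H-1}. For \eqref{approx_strong_S-5} I would repeat the argument starting from It\^o's formula for $\vp^{\ve,\l}(X_{t}^{\ve})$ rather than $t\vp^{\ve,\l}(X_{t}^{\ve})$: the $\vp^{\ve,\l}(X^{\ve})\,dt$ term is then absent and only routine bookkeeping of the lower-order contributions remains, giving the bound $\E\vp^{\ve}(x_{0})+C$.

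The main obstacle is precisely the algebraic cancellation displayed above. In Section~\ref{sec:mc-pinned} the $\frac{\a^{2}}{2}\partial_{x}^{2}$ drift is absent and the It\^o correction only needs to be absorbed into Gronwall-type terms, whereas here one must verify that the square-root structure of $B$ is exactly compatible with the It\^o--Stratonovich correction so that no uncontrolled $(\partial_{x}^{2}X^{\ve})^{2}$-contribution survives, which forces the restriction $\a\le\sqrt{2}$. Everything else --- the $C^{2}$-regularisation via $J^{\l}$, the validity of It\^o's formula on $H$ (licensed by $X^{\ve}\in L^{2}([0,T]\times\O;H_{per}^{2})$), and the dominated-convergence passages to the limit $\l\downarrow0$ --- is routine once the uniform bounds of Lemmas~\ref{lem:H1-bound-1} and \ref{lem:H-bound-2} are in hand.
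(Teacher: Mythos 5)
Your proposal is correct and follows essentially the same route as the paper: regularize via the resolvent $J^{\l}$ of the periodic Laplacian, apply It\^o's formula to $t\vp^{\ve,\l}(X_{t}^{\ve})$ (resp. $\vp^{\ve,\l}(X_{t}^{\ve})$), pass to the limit $\l\downarrow0$ by dominated convergence using Lemmas \ref{lem:H1-bound-1} and \ref{lem:H-bound-2}, and observe that the It\^o correction from the gradient-dependent noise cancels against the extra drift $\tfrac{\a^{2}}{2}\partial_{x}^{2}X^{\ve}$. The only cosmetic difference is that you compute the exact (nonpositive, since $\a\le\sqrt{2}$) remainder of this cancellation, whereas the paper upper-bounds the It\^o correction terms so that they cancel the expanded extra term exactly; the content is identical.
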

\begin{proof}
We first prove \eqref{eq:approx_strong_H-1}: Let $J^{\l}$ be the resolvent of $-\partial_{x}^{2}$ on $L^{2}(\mcO)$ with domain $\mcD(-\partial_{x}^{2})=H_{per}^{2}(\mcO)$. As in the proof of Lemma \ref{lem:strong_approx} we obtain
\begin{align}
 & \E t\vp^{\ve,\l}(X_{t}^{\ve})\nonumber \\
= & -\E\int_{0}^{t}r(\ve\partial_{x}^{2}J^{\l}X_{r}^{\ve}+\partial_{x}\phi(\partial_{x}J^{\l}X_{r}^{\ve}),\ve J^{\l}\partial_{x}^{2}X_{r}^{\ve}+J^{\l}\partial_{x}\phi(\partial_{x}X_{r}^{\ve}))_{H}dr\nonumber \\
 & -\E\int_{0}^{t}r(\ve\partial_{x}^{2}J^{\l}X_{r}^{\ve}+\partial_{x}\phi(\partial_{x}J^{\l}X_{r}^{\ve}),\frac{\a^{2}}{2}J^{\l}\partial_{x}^{2}X_{r}^{\ve})_{H}dr\label{eq:periodic_strong_test}\\
 & +\frac{\ve\a^{2}}{2}\int_{0}^{t}r\int_{\mcO}\left(\partial_{x}J^{\l}\sqrt{1+(\partial_{x}X_{r}^{\ve})^{2}}\right)^{2}dxdr\nonumber \\
 & +\frac{\a^{2}}{2}\int_{0}^{t}r\int_{\mcO}\dot{\phi}(\partial_{x}J^{\l}X_{r}^{\ve})\left(\partial_{x}J^{\l}\sqrt{1+(\partial_{x}X_{r}^{\ve})^{2}}\right)^{2}dxdr\nonumber \\
 & +\E\int_{0}^{t}\vp^{\ve,\l}(X_{r}^{\ve})dr.\nonumber 
\end{align}
 We first note that
\begin{align*}
 & \int_{\mcO}(\partial_{x}J^{\l}\sqrt{1+(\partial_{x}v)^{2}})^{2}dx\\
 & \le\int_{\mcO}(\partial_{x}\sqrt{1+(\partial_{x}v)^{2}})^{2}dx\\
 & =\int_{\mcO}\frac{(\partial_{x}v)^{2}(\partial_{x}^{2}v)^{2}}{1+(\partial_{x}v)^{2}}dx\\
 & \le\|\partial_{x}^{2}v\|_{2}^{2},
\end{align*}
for all $v\in H_{per}^{2}$. Moreover,
\begin{align*}
 & \dot{\phi}(\partial_{x}J^{\l}(v))(\partial_{x}J^{\l}\sqrt{1+(\partial_{x}v)^{2}})^{2}\\
 & =(1-\frac{\a^{2}}{2})\frac{(\partial_{x}J^{\l}\sqrt{1+(\partial_{x}v)^{2}})^{2}}{1+(\partial_{x}J^{\l}v)^{2}}\\
 & \le(1-\frac{\a^{2}}{2})(\partial_{x}J^{\l}\sqrt{1+(\partial_{x}v)^{2}})^{2},
\end{align*}
for all $v\in H_{per}^{2}$. Since $r\mapsto\sqrt{1+r^{2}}$ is Lipschitz we have 
\[
\sqrt{1+(\partial_{x}v)^{2}}\in H_{per}^{1}
\]
 for $v\in H_{per}^{2}$ and thus (cf. \cite[Theorem 2.13]{MR92})
\[
J^{\l}\sqrt{1+(\partial_{x}v)^{2}}\to\sqrt{1+(\partial_{x}v)^{2}}\quad\text{in }H_{per}^{1},
\]
for $\l\to0$. Moreover, $J^{\l}v\to v$ in $H_{per}^{2}$ for $v\in H_{per}^{2}$. Thus, $\partial_{x}J^{\l}v\to\partial_{x}v$ in $H_{per}^{1}\subseteq L^{\infty}$. Since $\dot{\phi}$ is Lipschitz we have $\dot{\phi}(\partial_{x}J^{\l}v)\to\dot{\phi}(\partial_{x}v)$ in $L^{2}([0,T]\times\O;L^{\infty})$ for all $v\in L^{2}([0,T]\times\O;H_{per}^{2})$. Hence,
\begin{align*}
 & \lim_{\l}\E\int_{0}^{t}r\int_{\mcO}\dot{\phi}(\partial_{x}J^{\l}v_{r})\left(\partial_{x}J^{\l}\sqrt{1+(\partial_{x}v_{r})^{2}}\right)^{2}dxdr\\
 & =\E\int_{0}^{t}r\int_{\mcO}\frac{(\partial_{x}\sqrt{1+(\partial_{x}v_{r})^{2}})^{2}}{1+(\partial_{x}v_{r})^{2}}dxdr\\
 & =\E\int_{0}^{t}r\int_{\mcO}\frac{(\partial_{x}v_{r})^{2}(\partial_{x}^{2}v_{r})^{2}}{(1+(\partial_{x}v_{r})^{2})^{2}}dxdr\\
 & \le\E\int_{0}^{t}r\int_{\mcO}\frac{(\partial_{x}^{2}v_{r})^{2}}{1+(\partial_{x}v_{r})^{2}}dxdr,
\end{align*}
for all $v\in L^{2}([0,T]\times\O;H_{per}^{2})$. Taking the limit $\l\to0$ in the first term on the right hand side of \eqref{eq:periodic_strong_test} can be justified as in Lemma \ref{lem:strong_approx} . This yields
\begin{align*}
\E t\vp^{\ve}(X_{t}^{\ve})\le & -\E\int_{0}^{t}r\|\ve\partial_{x}^{2}X_{r}^{\ve}+\partial_{x}\phi(\partial_{x}X_{r}^{\ve})\|_{H}^{2}dr\\
 & -\frac{\ve\a^{2}}{2}\E\int_{0}^{t}r\|\partial_{x}^{2}X_{r}^{\ve}\|_{H}^{2}dr-\frac{\a^{2}}{2}\E\int_{0}^{t}r\int_{\mcO}\frac{(\partial_{x}^{2}X_{r}^{\ve})^{2}}{1+(\partial_{x}X_{r}^{\ve})^{2}}dxdr\\
 & +\frac{\ve\a^{2}}{2}\E\int_{0}^{t}r\|\partial_{x}^{2}X_{r}^{\ve}\|_{H}^{2}dr+\frac{\a^{2}}{2}\E\int_{0}^{t}r\int_{\mcO}\frac{(\partial_{x}^{2}X_{r}^{\ve})^{2}}{1+(\partial_{x}X_{r}^{\ve})^{2}}dxdr\\
 & +\E\int_{0}^{t}\vp^{\ve}(X_{r}^{\ve})dr\\
= & -\E\int_{0}^{t}r\|\ve\partial_{x}^{2}X_{r}^{\ve}+\partial_{x}\phi(\partial_{x}X_{r}^{\ve})\|_{H}^{2}dr+\E\int_{0}^{t}\vp^{\ve}(X_{r}^{\ve})dr.
\end{align*}
By Lemma \ref{lem:H-bound-2} we conclude
\begin{align*}
 & \E t\vp^{\ve}(X_{t}^{\ve})+\E\int_{0}^{t}r\|\ve\partial_{x}^{2}X_{r}^{\ve}+\partial_{x}\phi(\partial_{x}X_{r}^{\ve})\|_{H}^{2}dr\le C\left(\E\|x_{0}\|_{H}^{2}+1\right).
\end{align*}
To prove \eqref{approx_strong_S-5} we proceed as above but applying Itô's formula for $\vp^{\ve,\l}(X_{t}^{\ve})$ instead of $t\vp^{\ve,\l}(X_{t}^{\ve})$.
\end{proof}

\begin{proof}[Proof of Theorem \ref{thm:periodic_main}]
\textbf{Step 1:} Existence 

We start with the construction via an approximation of the initial condition. Let $x_{0}^{n}\in L^{2}(\O;H_{per}^{1})$ with $x_{0}^{n}\to x$ in $L^{2}(\O;H)$. By Lemma \ref{lem:H1-bound-1} there are strong solutions $X^{\ve,n}$ to 
\begin{align*}
dX_{t}^{\ve,n} & =\ve\partial_{x}^{2}X_{t}^{\ve,n}dt+\frac{\a^{2}}{2}\partial_{x}^{2}X_{t}^{\ve,n}dt-\partial\vp(X_{t}^{\ve,n})dt+B(X_{t}^{\ve,n})d\b_{t}\\
X_{t}^{\ve,n} & =x_{0}^{n},
\end{align*}
satisfying
\[
\E\sup_{t\in[0,T]}\|X_{t}^{\ve,n}\|_{H_{per}^{1}}^{2}+\ve\E\int_{0}^{T}\|\partial_{x}^{2}X_{r}^{\ve,n}\|_{2}^{2}dr\le C(\E\|x_{0}^{n}\|_{H_{per}^{1}}^{2}+1).
\]
We will first prove convergence of $X^{\ve,n}$ for $\ve\to0.$ For $\ve_{1},\ve_{2}>0$ let $X^{\ve_{1}},X^{\ve_{2}}$ be two solutions to \eqref{eq:mc-normal-ito-visc} with initial conditions $x_{0}^{1},x_{0}^{2}\in L^{2}(\O;H_{per}^{1})$. Itô's formula, Lemma \ref{lem:H1-bound-1} and \eqref{eq:periodic-B-Lip} imply
\begin{align}
 & \E\|X_{t}^{\ve_{1}}-X_{t}^{\ve_{2}}\|_{H}^{2}\nonumber \\
 & =\E\|x_{0}^{1}-x_{0}^{2}\|_{H}^{2}\nonumber \\
 & +\E\int_{0}^{t}2(\ve_{1}\partial_{x}^{2}X_{r}^{\ve_{1}}-\ve_{2}\partial_{x}^{2}X_{r}^{\ve_{2}},X_{r}^{\ve_{1}}-X_{r}^{\ve_{2}})_{2}dr\nonumber \\
 & +\frac{\a^{2}}{2}\E\int_{0}^{t}2(\partial_{x}^{2}X_{r}^{\ve_{1}}-\partial_{x}^{2}X_{r}^{\ve_{1}},X_{r}^{\ve_{1}}-X_{r}^{\ve_{2}})_{2}dr\nonumber \\
 & +\E\int_{0}^{t}2(\partial_{x}\phi(\partial_{x}X_{r}^{\ve_{1}})-\partial_{x}\phi(\partial_{x}X_{r}^{\ve_{2}}),X_{r}^{\ve_{1}}-X_{r}^{\ve_{2}})_{2}dr\label{eq:stab}\\
 & +\E\int_{0}^{t}\|B(X_{r}^{\ve_{1}})-B(X_{r}^{\ve_{2}})\|_{L_{2}(\R,H)}^{2}dr\nonumber \\
 & \le\E\|x_{0}^{1}-x_{0}^{2}\|_{H}^{2}+4(\ve_{1}+\ve_{2})\E\int_{0}^{t}\|X_{r}^{\ve_{1}}\|_{H_{per}^{1}}^{2}+\|X_{r}^{\ve_{2}}\|_{H_{per}^{1}}^{2}dr\nonumber \\
 & -\a^{2}\E\int_{0}^{t}\int_{\mcO}(\partial_{x}X_{r}^{\ve_{1}}-\partial_{x}X_{r}^{\ve_{2}})^{2}dxdr+\a^{2}\E\int_{0}^{t}\int_{\mcO}(\partial_{x}X_{r}^{\ve_{1}}-\partial_{x}X_{r}^{\ve_{2}})^{2}dxdr\nonumber \\
 & \le\E\|x_{0}^{1}-x_{0}^{2}\|_{H}^{2}+(\ve_{1}+\ve_{2})C\left(\E\|x_{0}^{1}\|_{H_{per}^{1}}^{2}+\E\|x_{0}^{2}\|_{H_{per}^{1}}^{2}+1\right).\nonumber 
\end{align}
Hence,
\[
X^{\ve,n}\to X^{n}\quad\text{in }C([0,T];L^{2}(\O;H)),\quad\text{for }\ve\to0
\]
for some $\mcF_{t}$-adapted $X^{n}$. Due to \eqref{eq:stab} we obtain
\begin{align*}
\E\|X_{t}^{n}-X_{t}^{m}\|_{H}^{2} & \le\E\|x_{0}^{n}-x_{0}^{m}\|_{H}^{2}
\end{align*}
and thus
\[
X^{n}\to X\quad\text{in }C([0,T];L^{2}(\O;H)),\quad\text{for }n\to\infty
\]
for some $\mcF_{t}$-adapted $X$. We shall now prove that $X$ is an SVI solution to \eqref{eq:mc-normal}. By Lemma \ref{lem:strong_approx-1} we have
\begin{align}
\E t\vp^{\ve}(X_{t}^{\ve,n})+\E\int_{0}^{t}r\|\ve\partial_{x}^{2}X^{\ve,n}+\partial_{x}\phi(\partial_{x}X^{\ve,n})\|_{H}^{2}dr\le & C\left(\E\|x_{0}^{n}\|_{H}^{2}+1\right)\label{eq:approx_strong}
\end{align}
and thus there is a function $\eta$ and a sequence $\ve_{m}\to0$ such that for each $\tau>0$ 
\[
\ve_{m}\partial_{x}^{2}X^{\ve_{m},n}+\partial_{x}\phi(\partial_{x}X^{\ve_{m},n})\rightharpoonup\eta^{n},\quad\text{in }L^{2}([\tau,T]\times\O;H).
\]
We can prove $\eta^{n}\in-\partial\vp(X^{n})$ as in the proof of Theorem \ref{thm:mc-pinned}. Taking the limit in \eqref{eq:approx_strong} yields
\begin{align*}
 & \E t\vp(X_{t}^{n})+\E\int_{0}^{t}r\|\eta^{n}\|_{H}^{2}dr\le C\left(\E\|x_{0}^{n}\|_{H}^{2}+1\right).
\end{align*}
We can now argue as above to obtain the existence of an $\eta\in-\partial\vp(X)$ and a subsequence of $\eta^{n}$ (again denoted by $\eta^{n}$ for simplicity) such that
\[
\eta^{n}\rightharpoonup\eta,\quad\text{in }L^{2}([\tau,T]\times\O;H),
\]
for all $\tau>0$. As in Remark \ref{rmk:varn_sol} we have
\begin{align*}
\E\|X_{t}^{\ve,n}-Z_{t}\|_{H}^{2}\le & \E\|X_{\tau}^{\ve,n}-Z_{\tau}\|_{H}^{2}+2\E\int_{\tau}^{t}(-\partial\vp(X_{r}^{\ve,n})-G_{r},X_{r}^{\ve,n}-Z_{r})_{2}dr\\
 & +\a^{2}\E\int_{\tau}^{t}(\partial_{x}^{2}Z_{r},X_{r}^{\ve,n}-Z_{r})_{2}dr+2\ve\E\int_{\tau}^{t}(\partial_{x}^{2}X_{r}^{\ve,n},X_{r}^{\ve,n}-Z_{r})_{2}dr,
\end{align*}
for all $t\ge\tau>0$. Using Lemma \ref{lem:H1-bound-1} we observe
\begin{align*}
\ve\E\int_{\tau}^{t}(\partial_{x}^{2}X_{r}^{\ve,n},X_{r}^{\ve,n}-Z_{r})_{2}dr & \le\ve^{\frac{4}{3}}\E\int_{\tau}^{t}\|\partial_{x}^{2}X_{r}^{\ve,n}\|_{2}^{2}dr+\ve^{\frac{2}{3}}\E\int_{\tau}^{t}\|X_{r}^{\ve,n}-Z_{r}\|_{2}^{2}dr\\
 & \le\ve^{\frac{1}{3}}C(\E\|x_{0}^{n}\|_{H_{per}^{1}}^{2}+1)+\ve^{\frac{2}{3}}\E\int_{\tau}^{t}\|X_{r}^{\ve,n}-Z_{r}\|_{2}^{2}dr.
\end{align*}
We obtain
\begin{align*}
\E\|X_{t}^{\ve,n}-Z_{t}\|_{H}^{2}\le & \E\|X_{\tau}^{\ve,n}-Z_{\tau}\|_{H}^{2}+2\E\int_{\tau}^{t}(-\partial\vp(X_{r}^{\ve,n})-G_{r},X_{r}^{\ve,n}-Z_{r})_{2}dr\\
 & +\a^{2}\E\int_{\tau}^{t}(\partial_{x}^{2}Z_{r},X_{r}^{\ve,n}-Z_{r})_{2}dr\\
 & +2\ve^{\frac{1}{3}}C(\E\|x_{0}^{n}\|_{H_{per}^{1}}^{2}+1)+2\ve^{\frac{2}{3}}\E\int_{\tau}^{t}\|X_{r}^{\ve,n}-Z_{r}\|_{2}^{2}dr.
\end{align*}
Now we take $\ve\to0$ to obtain
\begin{align*}
\E\|X_{t}^{n}-Z_{t}\|_{H}^{2}\le & \E\|X_{\tau}^{n}-Z_{\tau}\|_{H}^{2}+2\E\int_{\tau}^{t}(\eta_{r}^{n}-G_{r},X_{r}^{n}-Z_{r})_{2}dr\\
 & +\a^{2}\E\int_{\tau}^{t}(\partial_{x}^{2}Z_{r},X_{r}^{n}-Z_{r})_{2}dr.
\end{align*}
Taking $n\to0$ yields the claim.

\textbf{Step 2: }Uniqueness 

Let $X$ be an SVI solution to \eqref{eq:mc-normal} with initial condition $x_{0}\in L^{2}(\O;H)$. Further let $y_{0}\in L^{2}(\O;H)$ and $y_{0}^{n}\in L^{2}(\O;H_{per}^{1}$) with $y_{0}^{n}\to y$ in $L^{2}(\O;H)$. Due to Lemma \ref{lem:strong_approx-1} there are strong solutions $Y^{\ve,n}$ to 
\begin{align*}
dY_{t}^{\ve,n} & =\ve\partial_{x}^{2}Y_{t}^{\ve,n}dt+\frac{\a^{2}}{2}\partial_{x}^{2}Y_{t}^{\ve,n}dt+(1-\frac{\a^{2}}{2})\frac{\partial_{x}^{2}Y_{t}^{\ve,n}}{1+(\partial_{x}Y_{t}^{\ve,n})^{2}}dt\\
 & +\a\sqrt{1+(\partial_{x}Y_{t}^{\ve,n})^{2}}d\b_{t}.\\
Y_{0}^{\ve,n} & =y_{0}^{n},
\end{align*}
satisfying
\[
\E\sup_{t\in[0,T]}\|Y_{t}^{\ve,n}\|_{H_{per}^{1}}^{2}+\ve\E\int_{0}^{T}\|\partial_{x}^{2}Y_{r}^{\ve,n}\|_{2}^{2}dr\le C(\E\|x_{0}^{n}\|_{H_{per}^{1}}^{2}+1).
\]
Using the variational inequality with $Z\equiv Y^{\ve,n}$ and 
\begin{align*}
G & =\ve\partial_{x}^{2}Y^{\ve,n}+\frac{\a^{2}}{2}\partial_{x}^{2}Y^{\ve,n}+\partial_{x}\phi(\partial_{x}Y^{\ve,n})
\end{align*}
we obtain
\begin{align*}
\E\|X_{t}-Y_{t}^{\ve,n}\|_{H}^{2} & \le\E\|X_{\tau}-Y_{\tau}^{\ve,n}\|_{H}^{2}\\
 & +2\E\int_{\tau}^{t}(\eta_{r}-\ve\partial_{x}^{2}Y_{r}^{\ve,n}-\frac{\a^{2}}{2}\partial_{x}^{2}Y_{r}^{\ve,n}-\partial_{x}\phi(\partial_{x}Y_{r}^{\ve,n}),X_{r}-Y_{r}^{\ve,n})_{2}dr\\
 & +\a^{2}\E\int_{\tau}^{t}(\partial_{x}^{2}Y_{r}^{\ve,n},X_{r}-Y_{r}^{\ve,n})_{2}dr\\
= & \E\|X_{\tau}-Y_{\tau}^{\ve,n}\|_{H}^{2}\\
 & +2\E\int_{\tau}^{t}(\eta_{r}-\partial_{x}\phi(\partial_{x}Y_{r}^{\ve,n}),X_{r}-Y_{r}^{\ve,n})_{2}dr\\
 & -\ve2\E\int_{\tau}^{t}(\partial_{x}^{2}Y_{r}^{\ve,n},X_{r}-Y_{r}^{\ve,n})_{2}dr,
\end{align*}
for all $t\ge\tau>0$. Since $-\partial_{x}\phi(\partial_{x}Y^{\ve,n})=\partial\vp(Y^{\ve,n})$ and $\eta\in-\partial\vp(X)$, $dt\otimes d\P$-a.e. we have
\begin{align*}
(\eta-\partial_{x}\phi(\partial_{x}Y^{\ve,n}),X-Y^{\ve,n})_{2} & =-(-\eta-\partial\vp(Y^{\ve,n}),X-Y^{\ve,n})_{2}\\
 & \le0,\quad dt\otimes d\P-\text{a.e..}
\end{align*}
Hence,
\begin{align*}
\E\|X_{t}-Y_{t}^{\ve,n}\|_{H}^{2} & \le\E\|X_{\tau}-Y_{\tau}^{\ve,n}\|_{H}^{2}-2\ve\E\int_{0}^{t}(\partial_{x}^{2}Y_{r}^{\ve,n}dt,X_{r}-Y_{r}^{\ve,n})_{2}dr.
\end{align*}
We further note
\begin{align*}
\ve(\partial_{x}^{2}Y_{r}^{\ve,n},X_{r}-Y_{r}^{\ve,n}) & \le\ve\|\partial_{x}^{2}Y_{r}^{\ve,n}\|_{2}\|X_{r}-Y_{r}^{\ve,n}\|_{2}\\
 & \le\ve^{\frac{4}{3}}\|\partial_{x}^{2}Y_{r}^{\ve,n}\|_{2}^{2}+\ve^{\frac{2}{3}}\|X_{r}-Y_{r}^{\ve,n}\|_{2}^{2}.
\end{align*}
Due to Lemma \ref{lem:strong_approx-1} this implies 
\[
2\ve\E\int_{0}^{t}(\partial_{x}^{2}Y_{r}^{\ve,n},X_{r}-Y_{r}^{\ve,n})_{2}dr\le\ve^{\frac{1}{4}}C(\E\|y_{0}^{n}\|_{H_{per}^{1}}^{2}+1)+2\ve^{\frac{2}{3}}\E\int_{0}^{t}\|X_{r}-Y_{r}^{\ve,n}\|_{2}^{2}dr.
\]
Thus
\begin{align*}
\E\|X_{t}-Y_{t}^{\ve,n}\|_{H}^{2} & \le\E\|X_{\tau}-Y_{\tau}^{\ve,n}\|_{H}^{2}\\
 & +\ve^{\frac{1}{4}}C(\E\|y_{0}^{n}\|_{H_{per}^{1}}^{2}+1)+2\ve^{\frac{2}{3}}\E\int_{0}^{t}\|X_{r}-Y_{r}^{\ve,n}\|_{2}^{2}dr.
\end{align*}
For $n\in N$ arbitrary, fixed we have seen in step one 
\[
Y^{\ve,n}\to Y^{n}\quad\text{in }C([0,T];L^{2}(\O;H))\quad\text{for }\ve\to0
\]
and
\[
Y^{n}\to Y\quad\text{in }C([0,T];L^{2}(\O;H)),\quad\text{for }n\to\infty,
\]
where $Y$ is an SVI solution to \eqref{eq:mc-normal} with initial condition $y_{0}$. We obtain
\begin{align*}
\sup_{t\in[0,T]}\E\|X_{t}-Y_{t}\|_{H}^{2} & \le\E\|X_{\tau}-Y_{\tau}\|_{H}^{2}.
\end{align*}
Now letting $\tau\to0$ yields
\begin{align*}
\sup_{t\in[0,T]}\E\|X_{t}-Y_{t}\|_{H}^{2} & \le\E\|x_{0}-y_{0}\|_{H}^{2}.
\end{align*}
In particular, choosing $y_{0}=x_{0}$ implies uniqueness of SVI solutions. 

\textbf{Step 3: $x_{0}\in L^{2}(\O;H)$ }with $\E\vp(x_{0})<\infty$

As in \eqref{eq:phi_ic_approx} we may choose the approximations $x_{0}^{n}\in L^{2}(\O;H_{0}^{1})\subseteq L^{2}(\O;H_{per}^{1})$ of $x_{0}$ considered in step one such that 
\begin{align*}
\E\vp(x_{0}^{n})+\E\|x_{0}^{n}\|_{2}^{2} & \le\E\vp(x_{0})+\E\|x_{0}\|_{2}^{2}<\infty.
\end{align*}
By Lemma \ref{lem:strong_approx-1} we then have
\begin{align*}
\E\vp(X_{t}^{\ve,n})+\E\int_{0}^{t}\|\partial_{x}\phi(\partial_{x}X_{r}^{\ve,n})\|_{H}^{2}dr & \le\E\vp(x_{0}^{n})+C,\\
 & \le\E\vp(x_{0})+C
\end{align*}
and we follow the same arguments as in Step 1 to pass to the limit.
\end{proof}
\appendix

\section{Relaxation of a linear growth functional with periodic boundary conditions\label{sec:periodic_relaxation}}

In this section we will prove that the functional
\[
\vp(v):=\begin{cases}
\int_{\mcO}\psi(Dv)dx+\frac{1}{2}\int_{\partial\mcO}|[v-v^{\bot}]|H^{d-1}(dx) & \text{if }v\in L^{2}\cap BV\\
+\infty & \text{if }v\in L^{2}\setminus BV,
\end{cases}
\]
where $\int_{\mcO}\psi(Dv)dx$ is defined as in Section \ref{sec:mc-pinned} is the lower-semicontinuous hull on $L^{2}$ of its restriction to $W_{per}^{1,1}(\mcO)$, where $\mcO=(0,1)$ (i.e. $d=1$). The arguments closely follow those from \cite[Fact 3.3]{A83} for the case of (inhomogeneous) Dirichlet boundary conditions. 
\begin{lem}
For all $u\in BV\cap L^{2}$ there exists a sequence of functions $u_{j}\in C^{1}\cap W_{per}^{1,1}\cap L^{2}$ such that 
\[
u_{j}\to u\quad\text{in }L^{2}
\]
and 
\[
\vp(u_{j})\to\vp(u).
\]
\end{lem}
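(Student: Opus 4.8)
The plan is to adapt, from (inhomogeneous) Dirichlet to periodic boundary data, the recovery-sequence construction of \cite[Fact~3.3]{A83}. Two elementary features of the one-dimensional linear growth setting will be used: since $BV(0,1)\hookrightarrow L^{\infty}(0,1)$, a sequence that is bounded in $L^{\infty}$ and converges in $L^{1}$ converges in $L^{2}$, so only $L^{1}$-convergence and uniform $L^{\infty}$-bounds need be tracked; and, by convexity and \eqref{eq:psi_cdt_mc}, $\psi$ is globally Lipschitz on $\R$, so $(t,z)\mapsto t\,\psi(z/t)$ is Lipschitz in $z$ uniformly in $t>0$, which combined with the facts recalled in Section~\ref{sec:mc-pinned} --- $t\mapsto t\,\psi(z/t)$ non-increasing with $\lim_{t\downarrow0}t\,\psi(z/t)=\psi^{0}(z)$ --- makes this limit uniform for $z$ in compact sets. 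I also use continuity of $w\mapsto\int_{(0,1)}\psi(Dw)$ along sequences converging strictly in $BV$ (i.e.\ $|Dw_{k}|((0,1))\to|Dw|((0,1))$), a Reshetnyak-type statement for the functional introduced in Section~\ref{sec:mc-pinned} (cf.\ \cite{A83,A85}).

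\emph{Step 1 (interior regularisation).} Let $u\in BV\cap L^{2}$ with traces $a:=u(0^{+})$, $b:=u(1^{-})$; let $\bar u$ extend $u$ to $\R$ by $a$ on $(-\infty,0)$ and $b$ on $(1,\infty)$, and put $u^{\ve}:=\rho_{\ve}*\bar u$ with $\rho_{\ve}$ a standard mollifier. Since $\bar u$ is continuous at $0$ and $1$ and constant outside $[0,1]$, we get $u^{\ve}\in C^{\infty}([0,1])$, $\|u^{\ve}\|_{\infty}\le\|u\|_{\infty}$, $u^{\ve}\to u$ in $L^{1}(0,1)$, $u^{\ve}(0)\to a$, $u^{\ve}(1)\to b$, and $\int_{(0,1)}|\partial_{x}u^{\ve}|\,dx\le|Du|((0,1))$; lower semicontinuity of the total variation under $L^{1}$-convergence forces the last quantity to converge to $|Du|((0,1))$, so $u^{\ve}\to u$ strictly in $BV(0,1)$. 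By the Reshetnyak-type continuity above, $\int_{(0,1)}\psi(\partial_{x}u^{\ve})\,dx\to\int_{(0,1)}\psi(Du)\,dx$, and as the endpoint values converge, $\vp(u^{\ve})\to\vp(u)$. Hence it suffices to treat $u\in C^{\infty}([0,1])\cap L^{2}$; note that $u^{\ve}$ need not be periodic, which is repaired next.

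\emph{Step 2 (periodic boundary layer).} Write $a:=u(0)$, $b:=u(1)$. For $\delta\in(0,\tfrac12)$ let $u_{\delta}$ equal $u$ on $[0,1-\delta]$ and equal the affine function joining $u(1-\delta)$ to $a$ on $[1-\delta,1]$; smoothing the two corners on a scale $\ll\delta$ we may take $u_{\delta}\in C^{1}$, and $u_{\delta}(0)=a=u_{\delta}(1)$, so $u_{\delta}\in C^{1}\cap W_{per}^{1,1}\cap L^{2}$. (If $C^{1}$ is meant periodically, replace the one-sided layer by two layers near $x=0$ and $x=1$ joining a common intermediate value; by positive $1$-homogeneity of $\psi^{0}$ the limiting cost is unchanged.) As $u_{\delta}$ differs from $u$ only on the shrinking interval $[1-\delta,1]$ and stays $L^{\infty}$-bounded, $u_{\delta}\to u$ in $L^{2}$. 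Since $u_{\delta}\in W_{per}^{1,1}$ its boundary term vanishes, so $\vp(u_{\delta})=\int_{(0,1-\delta)}\psi(\partial_{x}u)\,dx+\int_{1-\delta}^{1}\psi(\partial_{x}u_{\delta})\,dx$; the layer integral equals $\delta\,\psi\big((a-u(1-\delta))/\delta\big)$ up to an $o(1)$ corner-smoothing error, and since $u(1-\delta)\to b$ and $t\,\psi(z/t)\to\psi^{0}(z)$ uniformly on compacts it converges to $\psi^{0}(a-b)$ (recall $\psi^{0}(\xi)=\psi^{0}(1)\,|\xi|$ on $\R$). Thus $\vp(u_{\delta})\to\int_{(0,1)}\psi(\partial_{x}u)\,dx+\psi^{0}(a-b)=\vp(u)$. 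A diagonal choice (first $\ve=\ve_{k}\downarrow0$, then $\delta=\delta_{k}\downarrow0$ sufficiently slowly) produces the sequence $u_{j}$.

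\emph{The main obstacle} is the convergence $\int_{(0,1)}\psi(\partial_{x}u^{\ve})\,dx\to\int_{(0,1)}\psi(Du)\,dx$ in Step~1: this Reshetnyak-type continuity of the linear growth functional along the mollified sequence is exactly where the definition of $\int\psi(Du)$ via the recession function $\psi^{0}$ on the singular part, together with the convexity and linear growth of $\psi$, are essential, and it is the content of the corresponding step of \cite[Fact~3.3]{A83}. The rest --- the uniform $L^{\infty}$-bounds, the upgrade from $L^{1}$ to $L^{2}$, the $C^{1}$-smoothing of corners, and the layer estimate $\delta\,\psi(z/\delta)\to\psi^{0}(z)$ --- is routine.
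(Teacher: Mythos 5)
Your construction follows essentially the same route as the paper's proof: an interior approximation by smooth functions along which the linear--growth energy converges, followed by a thin boundary layer that enforces periodicity at an asymptotic cost given by the recession function of the endpoint mismatch. (The paper takes interior approximants $v_{j}$ with traces matching those of $u$ from \cite[Theorem 10.1.2, Remark 10.2.1]{ABM06} and adds an explicit cut-off corrector $w_{j}$ moving both endpoint values to $\tfrac{u(0)+u(1)}{2}$; your one-sided affine layer joining $u(1-\delta)$ to $u(0)$ is an equivalent device, and your diagonalization plays the role of the paper's final appeal to \cite[Fact 3.3]{A83}.)

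One ingredient is mis-stated and, as phrased, false: $w\mapsto\int_{(0,1)}\psi(Dw)$ is \emph{not} continuous along merely strictly converging sequences when $\psi$ is convex of linear growth but not positively $1$-homogeneous. Smoothed staircases $u_{k}$ converging in $L^{1}$ to $u(x)=x$ with $\int_{0}^{1}|u_{k}'|\,dx=1=|Du|((0,1))$ but with $u_{k}'$ concentrating on sets of vanishing measure satisfy $\int_{0}^{1}\psi(u_{k}')\,dx\to\psi(0)+\psi^{0}(1)\ne\psi(1)=\int_{(0,1)}\psi(Du)$. The Reshetnyak continuity theorem applied to the perspective integrand $(t,z)\mapsto t\psi(z/t)$ requires \emph{area-strict} convergence, i.e. $\int_{0}^{1}\sqrt{1+|\partial_{x}u^{\ve}|^{2}}\,dx\to\int_{(0,1)}\sqrt{1+|Du|^{2}}$, which is strictly stronger. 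Fortunately your Step~1 sequence is a mollification of the extension $\bar u$, and mollifications do converge area-strictly (Jensen's inequality gives the $\limsup$ bound, lower semicontinuity the $\liminf$), so the step is correct once the hypothesis is stated properly; this area-type convergence is also exactly what the paper records for its $u_{j}$. A last remark: your layer produces the boundary cost $\psi^{0}(u(0)-u(1))=\psi^{0}(1)|u(1)-u(0)|$, which agrees with the boundary term $|u(1)-u(0)|$ in the stated form of $\vp$ only under the normalization $\psi^{0}(1)=1$ (as holds for the area integrand used in the paper's own computation); this is a normalization issue in the statement rather than a flaw in your argument.
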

\begin{proof}
Let $v_{j}\in C^{1}\cap W^{1,1}\cap L^{2}$ be a sequence of functions satisfying
\begin{align*}
v_{j} & \to u\quad\text{in }L^{2}\\
\int_{\mcO}|Dv_{j}|dx & \to\int_{\mcO}|Du|dx\quad\text{for }j\to\infty
\end{align*}
and $v_{j}=u$ on $\partial\mcO$ (cf. \cite[Theorem 10.1.2 and Remark 10.2.1]{ABM06}). We further define cut-off functions $w_{j}\in C^{1}\cap W^{1,1}\cap L^{2}$
\begin{align*}
w_{j}|_{\partial\mcO} & =\frac{u^{\bot}-u}{2}|_{\partial\mcO},\\
w_{j}(x) & =0,\quad\forall dist(x,\partial\mcO)>\frac{1}{j},\\
\int_{\mcO}|Dw_{j}|dx & \le\int_{\partial\mcO}|\frac{u^{\bot}-u}{2}|H^{d-1}(dx)+\frac{1}{j},\\
\int_{\mcO}|w_{j}|^{2}dx & \le\frac{C}{j}.
\end{align*}
Let $u_{j}=v_{j}+w_{j}$. Then $u_{j}(1)=u_{j}(0)=\frac{u(0)+u(1)}{2}$, in particular $u_{j}\in C^{1}\cap W_{per}^{1,1}\cap L^{2}$. Moreover,
\begin{align*}
u_{j} & \to u,\quad\text{in }L^{2}\\
\int_{\mcO}\sqrt{1+|Du_{j}|^{2}}dx & \to\int_{\mcO}\sqrt{1+|Du|^{2}}dx+\int_{\partial\mcO}|\frac{u^{\bot}-u}{2}|H^{d-1}(dx).
\end{align*}
We then complete the proof precisely as in \cite[Fact 3.3]{A83}.\end{proof}
\begin{lem}
For every $u\in L^{2}$ and every sequence $u_{j}\in BV\cap L^{2}$ with $u_{j}\to u$ in $L^{2}$ we have
\[
\liminf_{j}\vp(u_{j})\ge\vp(u).
\]
\end{lem}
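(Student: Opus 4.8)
The plan is to convert the boundary term of $\vp$ into an interior contribution by viewing everything on the circle $S^{1}$ obtained from $\mcO=(0,1)$ by identifying $0$ with $1$, and then to invoke the classical lower semicontinuity of convex functionals of a measure.

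First I would dispose of the trivial case. If $u\notin BV(\mcO)$ then $\vp(u)=+\infty$; but from $\psi(\xi)\ge c|\xi|-C$ and the non-negativity of the boundary term one has $\int_\mcO|Du_j|\le C'(1+\vp(u_j))$, so a finite value of $\liminf_j\vp(u_j)$ would force $\sup_j\int_\mcO|Du_j|<\infty$ and hence, since $u_j\to u$ in $L^2\hookrightarrow L^1(\mcO)$, $u\in BV(\mcO)$ --- a contradiction. Thus $\liminf_j\vp(u_j)=+\infty=\vp(u)$ in that case. Hence I may assume $u\in BV(\mcO)\cap L^2$, pass to a subsequence realising the $\liminf$, and assume $\sup_j\vp(u_j)<\infty$; then $\sup_j\int_\mcO|Du_j|<\infty$, and because $d=1$ this also bounds $\|u_j\|_{L^\infty}$ and the one-sided traces $u_j(0^+),u_j(1^-)$ uniformly.

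Next I would pass to $S^{1}$. Writing $\tilde v\in BV(S^{1})\cap L^2(S^{1})$ for the periodic extension of $v\in BV(\mcO)\cap L^2$, the key observation is the identity
\[
\vp(v)=\int_{S^{1}}\psi(D\tilde v),
\]
where $\int_{S^{1}}\psi(\cdot)$ of a measure is defined exactly as in \cite{A85} (cf.\ Section~\ref{sec:mc-pinned}): the restriction of $D\tilde v$ to $(0,1)$ reproduces $\int_\mcO\psi(Dv)dx$, while the only additional singular part of $D\tilde v$ is the atom at the gluing point, of mass and orientation $v(1^-)-v(0^+)$, and by the definition of $v^{\bot}$ this atom carries precisely the boundary term $\tfrac12\int_{\partial\mcO}|[v-v^{\bot}]|\,H^{d-1}(dx)$ of $\vp$. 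From $u_j\to u$ in $L^1(\mcO)$ I then get $\tilde u_j\to\tilde u$ in $L^1(S^{1})$, and together with $\sup_j|D\tilde u_j|(S^{1})<\infty$ this yields $D\tilde u_j\rightharpoonup D\tilde u$ weakly-$*$ as measures on $S^{1}$; crucially, $S^{1}$ being compact and without boundary, no mass can escape in this limit.

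Finally, the functional $\mu\mapsto\int_{S^{1}}\psi(\mu)$ is sequentially weakly-$*$ lower semicontinuous on the measures on $S^{1}$, because $\psi$ is convex with finite recession function $\psi^{0}$ --- this is the Reshetnyak/Goffman--Serrin type statement underlying \cite{A83}, now applied on a compact manifold without boundary, so with no boundary contributions to monitor. Combining this with the identity above,
\[
\liminf_j\vp(u_j)=\liminf_j\int_{S^{1}}\psi(D\tilde u_j)\ \ge\ \int_{S^{1}}\psi(D\tilde u)=\vp(u),
\]
which is the claim. The hard part is the identity in the third paragraph: one must check carefully that under periodic gluing the integral over $\partial\mcO$ is exactly the singular mass $\tilde v$ places at the seam, so that $\vp$ is genuinely a \emph{boundaryless} convex functional of a measure on $S^{1}$. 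Once this bookkeeping is in place, the endpoint-concentration difficulties that would otherwise have to be tracked directly on $(0,1)$ --- as in the inhomogeneous Dirichlet case handled in \cite[Fact~3.3]{A83} --- simply do not arise, and the statement reduces to the standard theory.
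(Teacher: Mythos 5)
Your argument is correct in outline and takes a genuinely different route from the paper, which at this point simply defers to \cite[Fact 3.4]{A83}. You periodize, so that the boundary term of $\vp$ becomes singular mass of $D\tilde v$ at the seam, and the statement then reduces to Reshetnyak/Goffman--Serrin lower semicontinuity of $\mu\mapsto\int\psi(\mu)$ under weak-$*$ convergence of measures on the compact, boundaryless $S^{1}$; the compactness step (uniform $BV$ bound from $\psi(\xi)\ge c|\xi|-C$, hence $D\tilde u_{j}\rightharpoonup^{*}D\tilde u$ with no escaping mass) is clean, and the handling of the case $u\notin BV$ is fine. What this buys is a self-contained proof that avoids tracking concentration of $|Du_{j}|$ at the two endpoints of $(0,1)$ by hand, which is exactly the delicate point in the Dirichlet-type argument the paper cites.

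The one step that does not survive scrutiny as written is the identity $\vp(v)=\int_{S^{1}}\psi(D\tilde v)$. By the definition of $\int\psi(\mu)$ used in the paper, the atom of $D\tilde v$ at the seam, of mass $|v(1^{-})-v(0^{+})|$, contributes $\psi^{0}(\pm1)\,|v(1^{-})-v(0^{+})|$, whereas the boundary term of $\vp$ is $\tfrac{1}{2}\int_{\partial\mcO}|[v-v^{\bot}]|=|v(1)-v(0)|$, i.e.\ carries weight $1$. These agree only if $\psi^{0}(\pm1)=1$. That holds for the area integrand $\sqrt{1+\xi^{2}}$ (which is what the recovery-sequence lemma of this appendix actually computes with), but not for the potential $\psi(r)=(1-\tfrac{\a^{2}}{2})\bigl(r\arctan r-\tfrac{1}{2}\log(r^{2}+1)\bigr)$ of Section \ref{sec:mc-periodic}, for which $\psi^{0}(\pm1)=(1-\tfrac{\a^{2}}{2})\tfrac{\pi}{2}$. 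Your scheme proves lower semicontinuity of the functional whose jump weight at the seam is $\psi^{0}(\pm1)$; for the lemma as literally stated you would additionally need $\psi^{0}(\pm1)\ge1$, and if $\psi^{0}(\pm1)<1$ the stated inequality in fact fails (take $u(x)=x$ and $u_{j}$ equal to $u$ on $(0,1-\tfrac{1}{j})$, dropping linearly to $0$ on $(1-\tfrac{1}{j},1)$: the boundary term of $u_{j}$ vanishes while the interior pays only $\psi^{0}(-1)+o(1)$, so $\liminf_{j}\vp(u_{j})=\psi(1)+\psi^{0}(-1)<\psi(1)+1=\vp(u)$). This normalization issue is inherited from the definition of $\vp$ rather than introduced by you --- consistency with the Dirichlet case of Section \ref{sec:mc-pinned} suggests the boundary term should be weighted by $\psi^{0}$ --- so the fix is to state your identity with the seam atom weighted by $\psi^{0}(\pm1)$, after which the rest of your argument goes through verbatim.
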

\begin{proof}
The proof is the same as for \cite[Fact 3.4]{A83}.
\end{proof}

\appendix
\bibliographystyle{amsalpha.bst}
\bibliography{/home/benni/cloud/current_work/latex-refs/refs}

\end{document}